\definecolor{vdarkred}{rgb}{0.6,0,0.2}
\definecolor{vdarkblue}{rgb}{0,0.2,0.6}
\newcommand{\lam}{\lambda}
\newcommand{\al}{\alpha}
\newcommand{\bet}{\beta}
\newcommand{\bgt}{\begin{itemize}}
\newcommand{\ent}{\end{itemize}}
\newcommand{\ds}{\displaystyle}
\newcommand{\brem}{\begin{rmk}}
\newcommand{\erem}{\end{rmk}}
\newcommand{\blem}{\begin{lem}}
\newcommand{\elem}{\end{lem}}
\newcommand{\bcor}{\begin{cor}}
\newcommand{\ecor}{\end{cor}}
\newcommand{\bTh}{\begin{Th}}
\newcommand{\eTh}{\end{Th}}
\newcommand{\bpropo}{\begin{propo}}
\newcommand{\epropo}{\end{propo}}
\newcommand{\op}{\operatorname}
\newcommand{\E}{\op{\mathbb{E}}}
\newcommand{\R}{\mathbb{R}}
\newcommand{\p}{\mathbb{P}}
\newcommand{\ff}{\frac{1}}
\newcommand{\eps}{\varepsilon}
\newcommand{\bbm}{\begin{bmatrix}}
\newcommand{\ebm}{\end{bmatrix}}
\newcommand{\bes}{\begin{equation*}}
\newcommand{\ees}{\end{equation*}}
\newcommand{\be}{\begin{equation}}
\newcommand{\ee}{\end{equation}}
\newcommand{\beqy}{\begin{eqnarray}}
\newcommand{\eeqy}{\end{eqnarray}}
\newcommand{\beq}{\begin{eqnarray*}}
\newcommand{\eeq}{\end{eqnarray*}}
\newcommand{\bpm}{\begin{pmatrix}}
\newcommand{\epm}{\end{pmatrix}}
\newtheorem{Th}{Theorem}[section]
\newtheorem{propo}[Th]{Proposition}
\newtheorem{Prop}[Th]{Proposition}
\newtheorem{lem}[Th]{Lemma}
\newtheorem{cor}[Th]{Corollary}
\theoremstyle{definition}
\newtheorem{rmk}[Th]{Remark}
\long\def\symbolfootnote[#1]#2{\begingroup
\def\thefootnote{\fnsymbol{footnote}}\footnote[#1]{#2}\endgroup} 
\providecommand{\keywords}[1]
{
	\small	
	\textbf{\textit{\footnotesize Keywords ---}} #1
}
\def\@addpunct#1{%
	\relax\ifhmode
	\ifnum\spacefactor>\@m \else#1\fi
	\fi}
\newcommand{\keywordsname}{Key words}
\def\@setkeywords{%
	{\itshape \keywordsname.}\enspace \@keywords\@addpunct.}
\def\keywords#1{\def\@keywords{#1}}
\let\@keywords=\@empty
\g@addto@macro{\maketitle}{\begingroup%
	\let\@makefnmark\relax  \let\@thefnmark\relax%
	\ifx\@keywords\@mpty\else\@footnotetext{\@setkeywords}\fi%
	\endgroup}
\def\blfootnote{\gdef\@thefnmark{}\@footnotetext}
\date{}
\author{Cambyse Pakzad} 
\title{Poisson statistics at the edge of Gaussian $\beta$-ensemble at high temperature}
\newcommand{\Addresses}{{
		\bigskip
		\footnotesize
		\textsc{MAP 5, UMR CNRS 8145 - Universit\'e Paris Descartes, France}\par\nopagebreak
		\textit{E-mail address}: \texttt{cambyse.pakzad@gmail.com}
	}}
		\keywords{Random matrices, Gaussian $\beta$-ensembles, Poisson statistics,  Extreme value theory.}
\begin{document}
	\maketitle
\blfootnote{\textup{2010} \textit{Mathematics Subject Classification}.
	60B20 - 60F05 - 60G70}
\begin{abstract}We study the asymptotic edge statistics of the Gaussian $\beta$-ensemble, a collection of $n$ particles, as the inverse temperature $\beta$ tends to zero as $n$ tends to infinity. In a certain decay regime of $\ds{\beta}$, the associated extreme point process is proved to converge in distribution to a Poisson point process as $n\to +\infty$. We also extend a well known result on Poisson limit for Gaussian extremes by showing the existence of an edge regime that we did not find in the literature.
\end{abstract}
	\section{Introduction}
	The study of spectral statistics in Random Matrix Theory has gathered a consequent volume of the research attention during the last decades. For several reasons, theses statistics are considered in the asymptotic regime: as the size of the matrix (and hence the number of eigenvalues) goes to infinity. One can inquiry about the behaviour of the whole spectrum (such as linear statistics), this is called \textit{global} statistics (or regime). The main object to study in this context is the empirical spectral measure and the goal  is to obtain a limiting distribution and give fluctuations around this limit. On the other hand, one can seek for more subtle, precise informations, like the spacing between two consecutive eigenvalues, or the nature of the largest eigenvalues; more generally, the joint distribution of eigenvalues in an interval of length $o(1)$. Such statistics are called \textit{local}. In this particular regime, we differentiate between the \textit{bulk} and the \textit{edge} statistics. The bulk regime focuses on intervals inside the support of the limiting spectral measure while the edge regime concerns about the boundary. In this article, we are mainly interested in the asymptotic local edge regime, which corresponds to the largest eigenvalues.
 
	Among random matrix models, two matrix ensembles are distinguished: Wigner matrices and invariant ensembles. The first one indicates matrices with independent components while the second gathers matrices whose law is invariant by symmetry group action. Their intersection is known as the GOE, GUE and GSE. Their origin trace back to the pioneer Wigner. He wanted to model complex highly correlated systems with (or lacking) different kind of symmetries (see \cite{Mehta,Forrester}) and considered Hamiltonians as large random matrices. The name stems from the invariance under certain group actions. The joint density of the eigenvalues can be derived (see for example Theorem $4.5.35$ on page $303$ in \cite{cupbook}, or in \cite{MatrixModelBetaEnsemble}) and is proportional to: $$P(\mathrm{d}\lam_1,\ldots,\mathrm{d}\lam_n) \propto \exp\left( -\frac{1}{2} \sum_{i=1}^{n}\lam^2_i\right)  \left| \Delta_n(\lam)\right| ^\beta \prod_{i=1}^{n}\mathrm{d}\lam_i .$$ The Vandermonde determinant is noted $\ds{\left| \Delta_n(\lam)\right| ^\beta:= \prod_{i<j}^{n} \left| \lam_j-\lam_i\right|^\beta}$, and $\beta\in \{1,2,4\}$.  
	 Let us mention that when $\beta=2$, the correlation functions, which will be our prime tool, describe a determinantal process (\textit{Gaudin-Mehta formula}, see for example Theorem $3.1.1$ on page $91$ in \cite{cupbook}). The idea that $\beta$ taking different values gives rise to different models is known as the \textit{Dyson's Threefold-Way} \cite{Dyson}.
 
	We can extend the model in two directions, allowing other values of $\beta$ and other potentials, by writing for $\beta>0$: \begin{align*}
P_{n,\beta,V}(\mathrm{d}\lam_1,\ldots,\mathrm{d}\lam_n):= \ff{Z_{n,\beta,V}} \exp\left( -{\beta} \sum_{i=1}^{n}V\left( \lam_i\right) \right)  \left| \Delta_n(\lam)\right| ^\beta \prod_{i=1}^{n}\mathrm{d}\lam_i .
	\end{align*} We refer this as the \textit{general $\beta$-ensemble}. If the potential is quadratic $\ds{V(x)=\frac{x^2}{4}}$, it reduces to the \textit{Gaussian $\beta$-ensemble} which is the object we study in this paper.
	
	In this context,  Dumitriu and  Edelmann \cite{MatrixModelBetaEnsemble} made a major breakthrough by constructing a matrix model for such $\beta$-ensemble with any $\beta>0$, hence extending the Dyson's Threefold-Way $\beta\in \{1,2,4\}$. It states that the Gaussian $\beta$-ensemble (viewed as a density probability function) is exactly the joint law of the spectrum of a certain simple matrix. The latter is obtained from successive Househ\"{o}lder transformations and has a symmetric tridiagonal
	form. 
	This representation of the Gaussian $\beta$-ensemble by a matrix model \cite{MatrixModelBetaEnsemble} led the way for many progresses \cite{Sutton1,VV,RRV,virag} on the understanding of the asymptotic local eigenvalue statistics for general $\beta>0$. In particular, the authors of \cite{Sutton1}, leaning on the tridiagonal structure of the Gaussian $\beta$-ensemble matrix model, gave multiple indications on how renormalized random matrices can be viewed as finite difference approximations to stochastic differential operators. Notably, the renormalization focuses on the top part of the matrix where the chi's random variables are large. This conjecture was investigated in \cite{RRV} where the properly renormalized largest eigenvalues are shown to converge jointly in distribution to the low-lying eigenvalues of a one-dimensional Schr\"{o}dinger operator, namely the stochastic Airy operator $\ds{\text{SAO}_\beta:=-\frac{\mathrm{d}^2}{\mathrm{d}x^2}+x+\frac{2}{\sqrt{\beta}}b'_x}$, understood as a random Schwartz distribution. The eigenvalues of this random operator, as for them, can be interpreted by variational formulation or by the eigenvalue-eigenvector equation between Schwartz distributions. Their result writes as for $k\geq 1$ fixed, denoting $\lam^\beta_1\geq \lam^\beta_2\geq ... \geq \lam^\beta_k$ the $k$ largest eigenvalues of the Gaussian $\beta$-ensemble matrix and $\Lambda^\beta_0\leq \Lambda^\beta_1 \leq ... \leq \Lambda^\beta_{k-1}$ the $k$ smallest eigenvalues of the stochastic Airy operator:
	$$n^{\frac{2}{3}} \left(2-\frac{\lam_i^\beta}{\sqrt{n}} \right) _{1\leq i \leq k}\xrightarrow[n\to\infty]{\text{law}}  \left(\Lambda^\beta_i \right)_{0\leq i \leq k-1}  . $$
	Since the minimal eigenvalue $\Lambda_0$ of SAO$_\beta$ has distribution minus TW$_\beta$, this work thereby enlarges Tracy-Widom law to all $\beta>0$, that is: $$n^{\frac{2}{3}}\left(\frac{\lam_i^\beta}{\sqrt{n}}-2 \right)\xrightarrow[n\to\infty]{\text{law}} \text{TW}_\beta.  $$
	The Tracy-Widom law (with parameter $\beta$) is qualified as \textit{universal}, in the sense that such local statistics hold for various matrix models (but also for objects outside of the random matrix field) and arises from highly correlated systems (such as modeled by some random matrices).
 
	For finite dimension $n$, one can choose $\beta = 0$ in the joint law $P$ of the Gaussian $\beta$-ensemble, which displays a lack of repulsion force as the Vandermonde factor vanishes, hence the correlation decreases, which means that randomness increases. In a Gibbs interpretation (which besides makes us refer to $Z_{n,\beta,V}$ and its counterparts as \textit{partition functions}), it comes down to consider an infinite temperature in such log-gas (terminology due to Dyson \cite{Dyson}). Readily, the joint density for $\beta=0$ is the density of $n$ i.i.d. Gaussian random variables whose maximum is known \cite{Resnick} to converge weakly, as $n\to +\infty$, when properly renormalized, to the Gumbel distribution, one of the three \textit{universal} distributions classes of the classical Extreme Value Theory. One deduces (see \cite[Th 7.1]{Coles}) Poisson limit for the Gaussian (ie: when $\beta=0$) extreme point process as the number $n$ of particles grows to infinity. This description in terms of Poisson point processes carries many informations and implies the limiting Gumbel distribution for the maximum particle.
 
	As the Gumbel law governs the typical fluctuations of the maximum of independent Gaussian variables, which corresponds to the case $\beta=0$, and the Tracy-Widom law stems from complicated (highly dependent) systems such as the largest particles in the case $\beta>0$ fixed and $n\to +\infty$, it is thus natural to ask for an interpolation between these two phases. The authors of \cite{TWHighTemp} answer this question by proving that the properly renormalized Tracy-Widom$_\beta$ converges in distribution to the Gumbel law as $\beta \to 0$. They use the characterization of the distribution of the bottom eigenvalues of the stochastic Airy operator in terms of the explosion times process of its associated Riccati diffusion (see \cite{RRV}). Regarding to our motivation, they could unfortunately not prove Poissonian statistics for the minimal eigenvalues $\ds{\left( \Lambda_i^\beta\right) }$, distributed according to the Tracy-Widom$_\beta$ law, in the limit $\beta \to 0$. This procedure would exactly reverse the order of the limits $\beta\to 0,n\to +\infty$ considered previously. Nonetheless, the authors investigated the weak convergence of the top eigenvalues in the double limit $\beta:=\beta_n \xrightarrow[n\to\infty]{}0$ by heuristic and numeric arguments. They alluded to the idea that one can achieve Poissonian statistics for $\beta$-ensemble using the same techniques as \cite{RRV,Sutton1}, at high temperature within the regime $n\beta \xrightarrow[n\to\infty]{} +\infty$. Concerning the bulk statistics, such work has been accomplished in the regime $\ds{\beta\sim n^{-1}}$, that is Poisson convergence of the point process $\ds{\sum_{i=1}^{n}\delta_{n(\lam_i-E)}}$ with $E \in (-2,2)$ an energy level in the \textit{Wigner sea} (see \cite{Duy1,Duy2}). This was achieved in \cite{PoissonTempLow} by means of correlation functions, which is also our method.
	 
	The goal of this paper is to understand the behaviour of the largest particles of the Gaussian $\beta$-ensemble as the inverse temperature $\beta_n$ converges to $0$ as $n$ goes to infinity. To this purpose, we study the limiting process of the extremes of the Gaussian $\beta$-ensemble. Among all possible decay rates for $\beta$, we restrict ourselves to the regime $\ds{n\beta \xrightarrow[n\to\infty]{}0}$. More precisely, our main result gives the convergence as $n\to +\infty$ of the extreme process toward a Poisson point process on $\R$. Two regimes for the extremes appear according to the asymptotic behaviour of a certain auxiliary scaling sequence $(\delta_n)$. In the situation where the latter converges, the scaling focuses on the very largest particles and the limiting process is inhomogeneous. Otherwise when $\ds{\delta_n\gg 1 }$, it comes down to consider the top particles which are slightly more inside the bulk. Also, it gives rise to a homogeneous limiting process.
	 Roughly speaking, the rescaled extreme eigenvalues approximate a Poisson point process which means that adjacent top particles are statistically independent. Our work also applies when $\beta$ is set to $0$ and \textit{de facto} includes asymptotics ($n\to +\infty$) of extremes of Gaussian variables ($\beta=0$). While the outcomes are identical for both $\beta$ cases, we want to stress out that the models are intrinsically distinct. We investigate this question in the subsequent Remark \ref{contigurmk}. Doing such simultaneous double scaling limit, we fulfill the corresponding task addressed by Allez and   Dumaz in \cite{TWHighTemp} within another regime mentioned in their work and by other means, namely, the correlation functions method used by Benaych-Georges and P\'ech\'e in \cite{PoissonTempLow}.
	
	 For $u=u_n$ and $v=v_n$ two sequences, we adopt the notation $\ds{u\ll v\iff \frac{u}{v} \xrightarrow[n\to\infty]{}0}$ and state our main result: 
	 \begin{Th}\label{main} Let $\beta=\beta_n$ be such that $\ds{0\leq \beta  \ll \ff{n\log( n)}}$. Let $(\lam_1,...,\lam_n)$ a family of random variables with joint law $P_{n,\beta}$:
		$$P_{n,\beta}(\mathrm{d}\lam_1,\ldots,\mathrm{d}\lam_n):= \ff{Z_{n,\beta}} \exp\left( -\frac{1}{2} \sum_{i=1}^{n}\lam^2_i\right)  \left| \Delta_n(\lam)\right| ^\beta \prod_{i=1}^{n}\mathrm{d}\lam_i,$$ with normalization constant $Z_{n,\beta}$ and Vandermonde determinant $\ds{\left| \Delta_n(\lam)\right| ^\beta:= \prod_{i<j}^{n} \left| \lam_j-\lam_i\right|^\beta}$.\ Given a positive sequence $(\delta_n)$, consider the extreme point process $$\xi_n :=\sum_{i=1}^n \delta_{a_n(\lam_i-b_n)},  $$ with modified Gaussian centering and scaling sequences:
	$$b_n:=\sqrt{2\log (n)}-\ff{2}\frac{\log\log (n) +2\log(\delta_n)+ \log (4\pi)}{\sqrt{2\log (n)}},\qquad a_n:= \delta_n \sqrt{2\log (n)} .$$
		\begin{itemize}
			\item Assume the perturbation $(\delta_n)$ to be such that $\ds{\delta_n \xrightarrow[n\to\infty]{}\delta>0}$. Then the random point process $\ds{\left( \xi_n \right) }$ converges in distribution to an inhomogeneous Poisson point process on $\R$ with intensity $\ds{e^{-\frac{x}{\delta}} \mathrm{d}x}$. 
			\item Assume $\ds{\delta_n \gg 1}$ such that $\ds{{\log(\delta_n)}{}\ll\sqrt{\log(n)}}$. Then the process $ \ds{\left( \xi_n \right) }$ converges in distribution to a homogeneous Poisson point process on $\R$ with intensity $1 $.
			\item When $\beta =0$, the condition on $(\delta_n)$ can be weakened to $\ds{{\log(\delta_n)}\ll \log(n)}$ in the previous statement.\\ 
		\end{itemize}
	\end{Th}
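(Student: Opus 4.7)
My strategy is to use the correlation function method employed by Benaych-Georges and Péché in \cite{PoissonTempLow}, adapting it to the edge rather than the bulk. By Kallenberg's criterion, it suffices to prove that for every relatively compact Borel set $A\subset\R$ and every $k\geq 1$ the factorial moments converge,
$$\E\!\left[\xi_n(A)\bigl(\xi_n(A)-1\bigr)\cdots\bigl(\xi_n(A)-k+1\bigr)\right]\;\longrightarrow\; \PAR{\int_A\rho(y)\,\ud y}^k,$$
with $\rho(y)=e^{-y/\delta}$ in case 1 and $\rho\equiv 1$ in case 2. This reduces to pointwise convergence of the rescaled $k$-point correlation function $\wtr_k^{(n)}(\by):=a_n^{-k}\rho_k^{(n)}(b_n+y_1/a_n,\ldots,b_n+y_k/a_n)$ to $\prod_i\rho(y_i)$, plus a uniform domination to exchange limit and integration.

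Starting from the joint density, I would write
$$\rho_k^{(n)}(x_1,\ldots,x_k)=\frac{n!}{(n-k)!}\cdot\frac{Z_{n-k,\beta}}{Z_{n,\beta}}\cdot \me^{-\frac12\sum_i x_i^2}\prod_{i<i'}|x_i-x_{i'}|^\beta\cdot \E_{n-k,\beta}\!\left[\prod_{i,j}|x_i-y_j|^\beta\right],$$
where the expectation is taken over an independent $(n-k)$-particle Gaussian $\beta$-ensemble $(y_j)$. The plan is to analyze each of these four factors at $x_i=b_n+y_i/a_n$.

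The Gaussian block, combined with $n!/(n-k)!\cdot a_n^{-k}\sim n^k/a_n^k$, reduces to the classical Mills-ratio computation underlying Gumbel convergence. A direct Taylor expansion of $b_n^2/2$ using the explicit form of $b_n$ yields $n\phi(b_n)/a_n\to 1$, while $b_n/a_n=1/\delta_n(1+o(1))$ and $y/a_n^2\to 0$ give $\me^{-\frac12(b_n+y/a_n)^2+\frac12 b_n^2}\to \me^{-y/\delta}$ in case 1 and $\to 1$ in case 2. The Vandermonde among the $x_i$'s factorizes as $\prod_{i<i'}(|y_i-y_{i'}|/a_n)^\beta$, and tends to $1$ since $\beta\log a_n=O(\beta\log\delta_n+\beta\log\log n)$, which vanishes under either $\beta\ll 1/(n\log n)$ and $\log\delta_n\ll\sqrt{\log n}$.

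The main obstacle is the joint control of $Z_{n-k,\beta}/Z_{n,\beta}$ and of the cross expectation $\E_{n-k,\beta}\bigl[\prod_{i,j}|x_i-y_j|^\beta\bigr]$. Writing $\prod_{i,j}|x_i-y_j|^\beta=\exp\bigl(\beta\sum_{i,j}\log|x_i-y_j|\bigr)$, the task is to show that, uniformly in $\by$ ranging over compacts and in $x_i\sim b_n$, the quantity $\beta\sum_{j=1}^{n-k}\log|x_i-y_j|$ converges to $0$ in $P_{n-k,\beta}$-probability with enough integrability. The typical $\log$-contribution is $O(\log\log n)$ since $|x_i-y_j|\leq O(\sqrt{\log n})$ when $y_j$ lies in a bulk-sized window, giving a total bias of order $n\beta\log\log n\to 0$ under the hypothesis $\beta\ll 1/(n\log n)$. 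The atypical contributions (from $y_j$ close to some $x_i$) would be handled by a union bound relying on the one-point density of the $\beta$-ensemble being dominated, in the high-temperature regime, by a Gaussian density which is exponentially suppressed near the edge $b_n$. The partition function ratio $Z_{n-k,\beta}/Z_{n,\beta}$ is then controlled in parallel by inserting a Gaussian-change-of-variable argument, producing the same negligible logarithmic corrections. When $\beta=0$ the Vandermonde and the partition function ratio disappear identically, and only the Mills-ratio asymptotics survive, which continue to hold under the weaker hypothesis $\log\delta_n\ll\log n$; this is the reason for the relaxed assumption in the final bullet of the statement.
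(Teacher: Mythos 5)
Your overall architecture coincides with the paper's: the factorial-moment criterion you invoke is exactly the paper's Proposition \ref{tool}, and your factorization of $\rho_k^{(n)}$ into a combinatorial factor, a Gaussian block, a Vandermonde among the $x_i$, a partition-function ratio and the cross expectation $\E_{P_{n-k,\beta}}\left[\prod_{i,j}|x_i-y_j|^\beta\right]$ is the paper's Lemma \ref{correlation1}. The Mills-ratio analysis of the Gaussian block and the treatment of the Vandermonde among the $x_i$ are correct and match the paper.

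The genuine gap is in the cross expectation and the partition-function ratio, which is where essentially all the work lies. Showing that $\beta\sum_{i,j}\log|x_i-y_j|$ is small \emph{in probability} does not give convergence of its exponential moment; you acknowledge needing ``enough integrability'' but supply no mechanism for it. The paper's mechanism is the pointwise bound $|a+b|^\beta\le 2^\beta e^{\beta(a^2+b^2)/8}$ (Lemma \ref{lem_technique}), which absorbs the cross product into a small perturbation of the Gaussian weight and hence into an explicitly computable Selberg partition-function ratio (Lemmas \ref{fondamental_partition_function_lem}--\ref{partition_function_3}); the same device produces the uniform-in-$k$ bound $R^n_k\le\Theta_K^k$ required by the second hypothesis of Proposition \ref{tool}, and also the tail and small-ball estimates (Lemmas \ref{top_estimate} and \ref{bulk_estimate}) that you invoke as ``the one-point density being dominated by a Gaussian density'' --- for $\beta>0$ that domination is not available off the shelf and is itself proved with the same machinery, so your argument is circular as written. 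You have also misplaced where the hypotheses actually bite: the ``typical bias'' $n\beta\log\log n$ you identify would only require $\beta\ll 1/(n\log\log n)$; the binding constraint $\beta\ll 1/(n\log n)$ arises from the small-ball contribution near the singularity of the logarithm (the term $n\beta\Lambda_2\le C\,n\beta\, b_n\exp\left(\tfrac{n\beta b_n^2u^2}{8(1-\beta/4)}\right)$ in the proof of Lemma \ref{jensen}), and the condition $\log\delta_n\ll\sqrt{\log n}$ in case (b) is forced not by $\beta\log a_n$ in the Vandermonde but by the term $\log^2(\delta_n)/(2\log n)$ in the expansion of $b_n^2$, which must vanish for $n\phi(b_n)/a_n\to1$ to hold.
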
 
	Let us first discuss the assumptions and conclusions of the theorem. 
We prove convergence of extreme point processes $$\mathcal{P}_n:=\sum_{i=1}^{n}\delta_{a_n(\lam_i-b_n)}$$ toward a Poisson point process on $\R$ with intensity $d\mu$ as $n\to +\infty$ for suitably chosen scaling sequences $(a_n),(b_n)$ and intensity $\mu$. This convergence occurs regardless to $\beta>0$ or $\beta=0$ although this gives rise to two different models. The scaling sequences are exactly the same in both cases and are derived from the classical Gaussian scaling (see \cite{Resnick}), except that we increase the scale $a_n$ by a multiplicative term $\delta_n$ and lower down the center $b_n$ by an additive term involving $\delta_n$.  We then observe two regimes:  first, when $$\ds{\delta_n \xrightarrow[n\to\infty]{}\delta>0},$$ the limiting process is an inhomogeneous Poisson process with intensity $\ds{e^{-\frac{x}{\delta}}dx}$ (which is a classical result in the purely Gaussian setting, when $\bet=0$). When $\delta_n\gg 1$, even in the purely Gaussian setting ($\bet=0$), we obtain a result that we did not find in the literature \cite{Coles,Leadbetter,Resnick}:  in this case, even though the interval considered (centered at $b_n$ and with width of order $a_n$) goes to $+\infty$, the limiting process is a homogeneous Poisson process.  An illustration of these phenomena is given in Figure \ref{figure} below.

		\begin{rmk}\label{contigurmk}
As previously mentionned, the Poissonian description of the extreme process, along with the normalizing constants $(a_n),(b_n)$ which display no dependence on $\beta$, is valid for both cases $\beta_n=0$ and $\beta_n>0$. The question of how close   both models are  is then raised. Therefore, we need to measure the impact of the  decay rate of $\beta_n$ upon the model. In this direction, one can compare the normalization constants $Z_{n,\beta}$ between different $\beta_n$ regimes. This idea emerges from equilibrium statistical mechanics where the $Z_{n,\beta}$ is seen as the \textit{partition function} in the Gibbs interpretation and is an important quantity characterizing the system. The computations show a transition at $\ds{\beta \sim n^{-2}}$. As soon as $\ds{\beta \gg n^{-2}}$, the repulsion is significant while for $\ds{\beta \ll n^{-2}}$, the partition function has same order as the normalizing constant of independent gaussians (case $\ds{\beta=0}$). We state this result in the forthcoming Lemma \ref{contigu} whose proof is postponed to Section \ref{section}. It indicates that our main theorem gains value when compelling $$n^{-2}\ll \beta\ll \left( n\log(n)\right)^{-1} ,$$ which corresponds to the regime where both models $\ds{\beta=0}$ and $\ds{n^{-2} \ll \beta\ll n^{-1}}$ are truly distinct. The critical role of $n^{2}$ in this description is consistent with the fact that one can write $$\ds{\log \left| \Delta_n(\lam)\right| ^\beta =\beta \sum_{i<j}^n \log \left| \lam_j-\lam_i\right|   }$$ with the sum having $n^2\left( 1+o(1)\right)$ terms. 
		\end{rmk}
		 \begin{figure}[!h]
		 	\centering	\includegraphics[width=15cm,height=5cm]{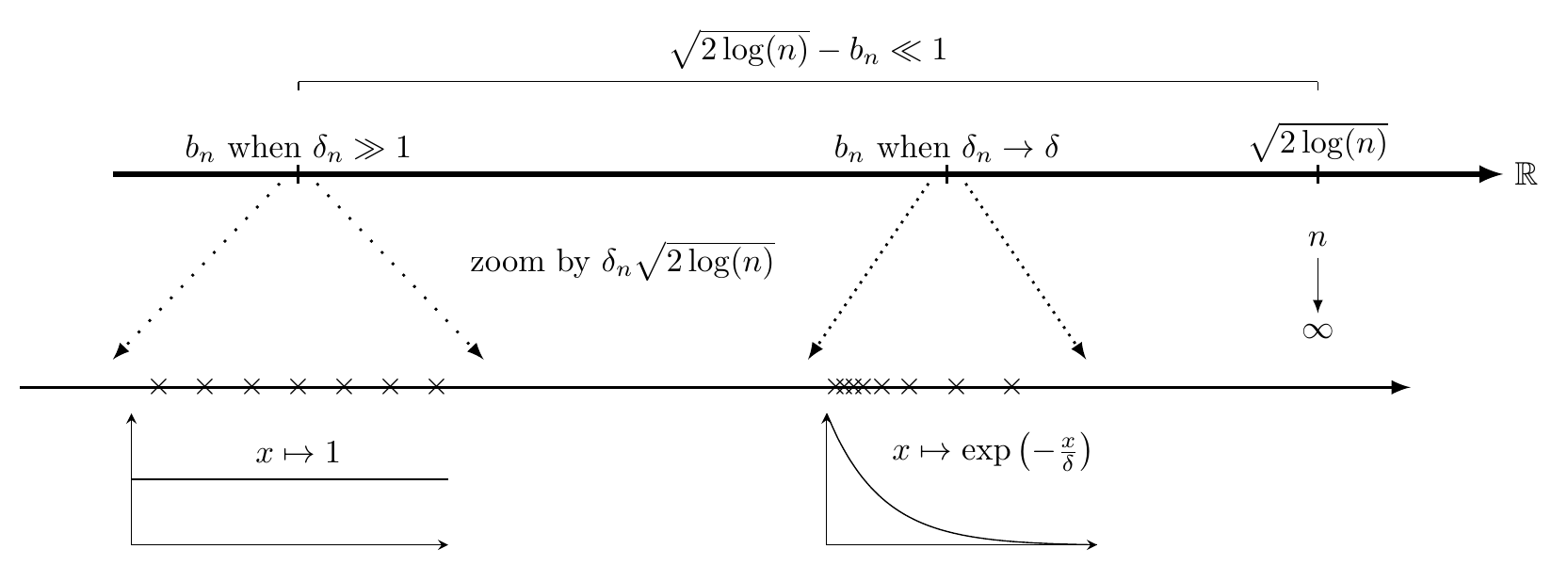}	
		 	\caption{\label{etiquette} The centering at $b_n$ for both cases  $\delta_n\to\delta >0$ and $\delta_n\to\infty$ are represented on the main line.   We zoom in around each $b_n$ by a factor $\ds{\delta_n \sqrt{2\log(n)}}$ and let $n$ go to $\infty$. For $\delta_n\gg 1$, the limiting object is a Poisson point process with intensity $1$. For $\ds{\delta_n\xrightarrow[n\to\infty]{}\delta}$, it leads to a Poisson point process with intensity $\ds{e^{-\frac{x}{\delta}}  }$.}
		 	\label{figure}
		 \end{figure} 
		\begin{lem} \label{contigu}
				Let $\beta \geq 0$ and $\beta'>0$.\begin{itemize}
					\item Assume $\ds{0\leq \beta\ll \beta'\ll \ff{n^2}}$, then $\ds{\frac{Z_{n,\beta'} }{Z_{n,\beta} }\xrightarrow[n\to \infty]{}1}$.
					\item Assume $\ds{0\leq \beta\ll \beta' \ll \ff{n} }$ and $\ds{\beta'\gg \ff{n^2}}$, then $\ds{Z_{n,\beta'}\ll Z_{n,\beta}}$.
				\end{itemize}
			\end{lem}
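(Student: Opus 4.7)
The idea is to reduce the comparison of $Z_{n,\beta}$ with $Z_{n,\beta'}$ to an explicit closed form coming from the Mehta--Selberg integral, and then to run a uniform Taylor expansion of $\log\Ga(1+\cdot)$ near the origin.

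\textbf{Step 1.} Invoke the Mehta--Selberg formula
\begin{equation*}
Z_{n,\beta} \eq (2\pi)^{n/2}\prod_{j=1}^{n}\frac{\Ga(1+j\beta/2)}{\Ga(1+\beta/2)},
\end{equation*}
valid for every $\beta\ge 0$; in particular $Z_{n,0}=(2\pi)^{n/2}$. It is therefore enough to understand $\log (Z_{n,\beta}/Z_{n,0})$ for $0\le \beta\ll 1/n$, since both asserted ratios are just quotients of such quantities.

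\textbf{Step 2.} Under the hypothesis $\beta\ll 1/n$, the argument $j\beta/2$ tends to $0$ uniformly in $1\le j\le n$, so the local expansion
\begin{equation*}
\log \Ga(1+x) \eq -\ga\, x + \frac{\pi^{2}}{12}\,x^{2} + O(x^{3}), \qquad x\to 0,
\end{equation*}
(with $\ga$ the Euler--Mascheroni constant) applies uniformly in $j$. Summing in $j$ and using $\sum_{j\le n}(j-1)=n(n-1)/2$ together with $\sum_{j\le n}(j^{2}-1)=O(n^{3})$ yields
\begin{equation*}
\log \frac{Z_{n,\beta}}{Z_{n,0}} \eq -\frac{\ga}{4}\,\beta\, n(n-1) + O(\beta^{2}n^{3}) + O(\beta^{3}n^{4}),
\end{equation*}
and both error terms are $o(\beta n^{2})$ since $\beta n\to 0$.

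\textbf{Step 3.} For the first bullet, $\beta\ll\beta'\ll n^{-2}$ forces $\beta n^{2}\to 0$ and $\beta' n^{2}\to 0$, whence $Z_{n,\beta}/Z_{n,0}\to 1$ and $Z_{n,\beta'}/Z_{n,0}\to 1$, and the claim follows by dividing the two. For the second bullet, subtracting the two expansions gives
\begin{equation*}
\log \frac{Z_{n,\beta'}}{Z_{n,\beta}} \eq -\frac{\ga}{4}(\beta'-\beta)\,n(n-1)\,(1+o(1)),
\end{equation*}
and $\beta\ll\beta'$ yields $\beta'-\beta\sim \beta'$, while $\beta'\gg n^{-2}$ gives $\beta' n^{2}\to\infty$; combining these sends the right-hand side to $-\infty$, hence $Z_{n,\beta'}\ll Z_{n,\beta}$.

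\textbf{Main obstacle.} The only genuinely technical point is the uniformity of the remainder $O(x^{3})$ in the expansion of $\log\Ga(1+x)$ as $x$ runs over $\{j\beta/2:1\le j\le n\}$. The assumption $\beta\ll 1/n$ is precisely what confines this set to a fixed neighbourhood of $0$ where $\log\Ga(1+\cdot)$ is analytic, making the $O(x^{3})$ bound uniform in $j$ and the term-by-term summation legitimate. Everything else amounts to elementary arithmetic on the resulting sums.
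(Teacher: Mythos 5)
Your proof is correct and follows essentially the same route as the paper: both reduce to the Selberg closed form $Z_{n,\beta}=(2\pi)^{n/2}\prod_{j\le n}\Gamma(1+j\beta/2)/\Gamma(1+\beta/2)$, Taylor-expand $\log\Gamma(1+\cdot)$ uniformly on $\{j\beta/2\}$ using $n\beta\to0$, and read off the conclusion from the sign and size of $\beta' n^2$. (Your leading constant $-\gamma/4$ is in fact the correct one; the paper's $-\gamma/8$ appears to be a harmless arithmetic slip, immaterial to either conclusion.)
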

			The convergence toward a Poisson process for the extreme process is a much stronger information than the limiting distribution of the maximum. Indeed, one can deduce  the limiting distribution for the $k^{\text{th}}$ largest eigenvalue for fixed $ 1\leq k <+\infty$. 
			\begin{cor}\label{corGumbel}
Let $\beta=\beta_n$ be such that $\ds{0\leq \beta  \ll \ff{n\log( n)}}$. Let $(\lam_1,...,\lam_n)$ with joint law $P_{n,\beta}$. Let $(a_n),(b_n)$ from Theorem \ref{main} for $\delta=1$. Then, 	\begin{align*}
	P_{n,\beta}\left(a_n\left(\lam_{\max} - b_n \right) \leq x \right)&\xrightarrow[n\to\infty]{} \exp\left( -  \exp\left( -x\right) \right) .
	\end{align*}
			\end{cor}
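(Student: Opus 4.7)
The plan is to deduce the corollary directly from Theorem \ref{main} applied with $\delta_n \equiv 1$ (so $\delta=1$ and limiting intensity $e^{-x}\,\mathrm{d}x$ on $\R$). The starting point is the tautology
$$\ds{\{a_n(\lam_{\max}-b_n) \leq x\} \eq \{\xi_n((x,+\infty)) = 0\}},$$
which reduces the claim to computing the limit of the void probability of $\xi_n$ on the half-line $(x,+\infty)$ and comparing it with the void probability of the limiting Poisson process, namely $\ds{\exp(-\int_x^{+\infty} e^{-t}\,\mathrm{d}t) = \exp(-e^{-x})}$.

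If $(x,+\infty)$ were relatively compact, vague/weak convergence of the point processes would give the result at once, so the whole difficulty is the unboundedness at $+\infty$. I would therefore use a truncation argument: for any $M>x$,
$$P\PAR{\xi_n((x,M])=0} - P\PAR{\xi_n((M,+\infty))\geq 1} \;\lee\; P\PAR{\xi_n((x,+\infty))=0} \;\lee\; P\PAR{\xi_n((x,M])=0}.$$
The interval $(x,M]$ is relatively compact and its boundary has zero Lebesgue measure, so Theorem \ref{main} yields
$$\ds{P\PAR{\xi_n((x,M])=0} \xrightarrow[n\to\infty]{} \exp\PAR{-(e^{-x}-e^{-M})} \xrightarrow[M\to+\infty]{} \exp(-e^{-x})}.$$
Hence it only remains to show that the residual $P(\xi_n((M,+\infty))\geq 1)$ is bounded, uniformly for large $n$, by a quantity vanishing as $M\to+\infty$.

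For this uniform tail estimate I would apply Markov's inequality together with the rescaled first correlation function:
$$\ds{P\PAR{\xi_n((M,+\infty))\geq 1} \;\lee\; \E[\xi_n((M,+\infty))] \eq n\int_{b_n+M/a_n}^{+\infty} \rho_{n,1}^{(\beta)}(\lam)\,\mathrm{d}\lam,}$$
where $\rho_{n,1}^{(\beta)}$ denotes the first marginal density under $P_{n,\beta}$. The one-point intensity computation performed in the proof of Theorem \ref{main} shows, under the assumption $\beta \ll (n\log n)^{-1}$, that after the change of variables $\lam = b_n + t/a_n$ the integrand converges to $e^{-t}$ and is dominated on $(M,+\infty)$ by an integrable tail; this should yield $\ds{\limsup_n P(\xi_n((M,+\infty))\geq 1) \leq C\, e^{-M}}$ for some absolute constant $C$. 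Passing first to the limit $n\to+\infty$ and then $M\to+\infty$ in the sandwich above delivers the Gumbel limit $\exp(-e^{-x})$.

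The main obstacle is precisely this uniform tail control: one must certify that no mass of $\xi_n$ escapes to $+\infty$. Morally this is guaranteed by the Gaussian super-exponential decay of the density, the very mild effect of the Vandermonde factor in the high-temperature regime $\beta \ll (n\log n)^{-1}$, and the explicit asymptotics of $\rho_{n,1}^{(\beta)}$ that are already developed in the proof of Theorem \ref{main}; from those estimates the required dominated-integrability bound on $(M,+\infty)$ follows with no additional ingredient.
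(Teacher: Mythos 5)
The paper itself gives no proof of Corollary \ref{corGumbel}: it is stated as an immediate consequence of the Poisson convergence (``one can deduce the limiting distribution for the $k$-th largest eigenvalue''), so there is no written argument to compare yours against. Your route --- identifying $\{a_n(\lam_{\max}-b_n)\le x\}$ with the void event $\{\xi_n((x,+\infty))=0\}$, truncating at $M$, using weak convergence on the relatively compact continuity set $(x,M]$, and killing the residual by a first-moment bound --- is the standard and correct way to make that deduction rigorous, and it correctly isolates the only real issue, namely that $(x,+\infty)$ is unbounded so vague convergence alone does not suffice.

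The one step you should not wave at is the uniform tail estimate $\limsup_n \p(\xi_n((M,+\infty))\ge 1)\le C e^{-cM}$. You claim it follows from the paper's estimates ``with no additional ingredient,'' but the uniform bound the paper actually proves (Lemma \ref{upperbound}) is only stated on compacts $K$, with a constant $\Theta_K=\exp(\tfrac18 M_K^2+\cdots)$ growing like $e^{M_K^2/8}$ in the diameter of $K$; applied naively on an exhaustion of $(M,+\infty)$ this gives a non-integrable majorant of $R^n_1(t)$ against $e^{-t}\,\mathrm{d}t$. The domination you need is nevertheless true, and is obtained by rerunning the computation of Lemma \ref{upperbound} for $k=1$ on the half-line $[M,+\infty)$ \emph{keeping the sign} of the linear term: in the step corresponding to \eqref{exp_term_to_bound} one should retain $-\frac{b_n}{a_n}t$ (which is negative for $t>0$, with $b_n/a_n\to 1$) instead of bounding it by $c_\delta M_K$, and combine it with the bound on $\tilde R^n_1$ from \eqref{rtilde_1}; the net exponent in $t$ of the Lebesgue intensity $R^n_1(t)e^{-t}$ is then $-\frac{b_n}{a_n}\bigl(1-\frac{n\beta}{4b_n^2}\bigr)t-\frac{t^2}{2a_n^2}\bigl(1-\frac{n\beta}{4b_n^2}\bigr)\le -\frac{t}{2}$ for $n$ large, giving $\E[\xi_n((M,+\infty))]\le Ce^{-M/2}$. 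Alternatively, since $\E[\xi_n((M,+\infty))]=n\,P_{n,\beta}(\lam_1>b_n+M/a_n)$ by exchangeability, the top-eigenvalue tail estimate of Lemma \ref{top_estimate} yields the same conclusion directly. With either of these one-line repairs your argument is complete.
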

	\begin{rmk}
			This result shows that we recover the Gumbel law as limiting distribution of the largest particle from the Poisson limit, so that we retrieve the result of \cite{TWHighTemp} corresponding to our setup. Besides, in view of \eqref{flo} in the next section, we know that the largest eigenvalue is unbounded when $n$ goes to infinity since the Gaussian distribution has unbounded support. In addition to this observation, our main result provides the explicit order and Gumbel fluctuations for the maximum eigenvalue.
		\end{rmk}
	The paper is organized as follows: first, we introduce and comment our model. To derive Poisson statistics, our method is the study of the correlation functions associated to the extreme point process. We refer to this as our \textit{main tool} and explain how it is exploited. Since the computations involve various estimates and quantities, we exhibit them as independent claims outside the main proof. The next section is devoted to the precise proof of our result. We give a tractable expression of the correlation functions. Then, we prove the conditions needed to provide inhomogeneous Poisson limit.  Our work transposes to the homogeneous limit with ease so we merge both cases in our statements. Finally, we give a peculiar proof of the statement when $\beta=0$. This is done by other means and displays a wider asymptotic regime for the perturbation $(\delta_n)$, so we present it as an independent result. 
		\begin{rmk}
			We consider two cases: $\delta_n = O(1)$ and $\delta_n \gg 1$. For the second case, the assumption required is $(\delta_n)$ such that $\ds{{\log(\delta_n)}\ll \sqrt{\log(n)}}$. It means that $\ds{\delta_n = e^{\eps_n \sqrt{2 \log(n)}}}$ with $\ds{\ff{\sqrt{\log(n)}}\ll \eps_n\ll 1}$. Note that the perturbation by $\delta_n$ corresponds to an increase of the zoom around the Gaussian center from first case minus a negligible factor.
			 Nonetheless, most of our results remain valid under both regimes and with a weaker growth restriction. For this reason, in this text, the reader will encounter a less restrictive hypothesis on $(\delta_n)$, namely $\ds{\log(\delta_n)\ll \log(n)}$. It ensures that $b_n$ is equivalent to $\ds{\sqrt{2\log(n)}}$ for any such $(\delta_n)$ as $n$ goes to infinity.
		\end{rmk}

	\begin{rmk}\label{extrafactor}
One may inquire about the extra factor $\ds{\log(n)}$ in our growth condition $\ds{n\beta\log(n)\ll 1}$ in comparaison with the original regime $\ds{n\beta\ll 1}$. Indeed, we also expect the result to hold when $\ds{\ff{n\log(n)}\ll \beta\ll \ff{n}}$. The reasons will become apparent along the paper. We will specially mention each time such restriction occurs. We also add that it seems rather difficult to overstep this technical limitation with our method.
	\end{rmk}

	\textbf{Acknowledgements:} I would like to express my gratitude to my supervisor Florent Benaych-Georges for his guidance throughout this work, his careful reading of the paper and the numerous advices he brought to me.
	\section{General model of the Gaussian $\beta$-ensemble for $\ds{\beta\ll 1}$ and $\al>0$}\label{sectionG}
	\subsection{Background and preliminaries}
		For any $\al >0$, $\beta\geq 0$, and $n\geq 1$, we define: \begin{align}
\label{definition_partition_function}Z_{n,\al,\beta}:= \int_{\R^n} \exp\left( -\frac{\al}{2} \sum_{i=1}^{n}\lam^2_i\right)  \left| \Delta_n(\lam)\right| ^\beta \prod_{i=1}^{n}\mathrm{d}\lam_i
		\end{align} with the Vandermonde determinant factor: $$  \left| \Delta_n(\lam)\right| ^\beta:=\prod_{i<j}^n \left|\lam_i-\lam_j \right|^\beta,$$ 
	and consider an exchangeable family $(\lam_1,...,\lam_n)$ of random variables with joint law  
	\begin{align}\label{P_nab}
P_{n,\al,\beta}(d\lam_1,...,d\lam_n)&:= \ff{Z_{n,\al,\beta}} \exp\left( -\frac{\al}{2} \sum_{i=1}^{n}\lam^2_i\right)  \left| \Delta_n(\lam)\right| ^\beta \prod_{i=1}^{n}\mathrm{d}\lam_i .
	\end{align} 
	When $\al=1$, we adopt the following notation: 
	$$Z_{n,\beta}:= \int_{\R^n} \exp\left( -\frac{1}{2} \sum_{i=1}^{n}\lam^2_i\right)  \left| \Delta_n(\lam)\right| ^\beta \prod_{i=1}^{n}\mathrm{d}\lam_i $$ 
	$$P_{n,\beta}(d\lam_1,...,d\lam_n):= \ff{Z_{n,\beta}} \exp\left( -\frac{1}{2} \sum_{i=1}^{n}\lam^2_i\right)  \left| \Delta_n(\lam)\right| ^\beta \prod_{i=1}^{n}\mathrm{d}\lam_i .$$ 
	In the sequel, the parameter $\al$ is always assumed to be $1$ except in some specific cases which will be mentionned. The reason of this choice shall be clear after incoming explanations.
	\begin{rmk}
For $\beta =0$, we retrieve the density of $n$ i.i.d. Gaussian random variables, which form a system of uncorrelated particles. The partition function in this case is just $\ds{Z_{n,\beta=0}=\left( 2\pi\right)^\frac{n}{2} }$. Allowing $\beta>0$, the Vandermonde factor vanishes when $\lam_i=\lam_j$ and acts as a (long range) repulsion force between the particles, which thereby constitutes a correlated system. The smaller $\beta$ is, the weaker repulsion operates. 
	\end{rmk}
	From the crucial matrix model of Dumitriu and Edelman \cite{MatrixModelBetaEnsemble}, we endow the Gaussian $\beta$-ensemble with a matrix structure. Recall that $\ds{\chi(k)}$ is defined for any $k>0$ by its density $\ds{\frac{  x^{\frac{k}{2}-1}e^{-\frac{x^2}{2}} }{ 2^{\frac{k}{2}-1}\Gamma(\frac{k}{2}) }}$ on $(0,+\infty)$. We state the corresponding result for our setup:
	\begin{Th}[Dumitriu, Edelman, \cite{MatrixModelBetaEnsemble}]
		Let $H:= H_{n,\al,\beta}$ the tridiagonal symmetric random matrix defined as:
		$$ \ff{\sqrt{\al}} \begin{pmatrix}
		g_1 &\ff{\sqrt{2}}X_{n-1} & && \\
		\ff{\sqrt{2}}X_{n-1}&g_2 & \ff{\sqrt{2}}X_{n-2}&&\\
		& \ff{\sqrt{2}}X_{n-2}&g_3 & \ff{\sqrt{2}}X_{n-3} &  \\
		& & \ddots &\ddots & \ddots &    \\
		& &\qquad  \ddots & \qquad \ddots &\ff{\sqrt{2}}X_{1} &   \\
		& & &\ff{\sqrt{2}}X_1& g_n 
		\end{pmatrix} ,$$ with $\ds{(g_i)_{1\leq i \leq n}\sim \mathcal{N}(0,1)}$ i.i.d. sequence, $\ds{(X_i)_{1\leq i \leq n-1}}$ an independent sequence such that $\ds{X_i \sim \chi(i\beta)}$ and independent overall entries up to symmetry. \\ \\
		For any $\al>0$, $\beta\geq 0$, the joint law of the eigenvalues $(\lam_1,...,\lam_n)$ of $H$ is $P_{n,\al,\beta}$.
	\end{Th}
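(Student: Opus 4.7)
The plan is to compute directly the pushforward of the joint entry density of $H$ onto spectral coordinates and check that it coincides with $P_{n,\al,\beta}$. Because $X_i>0$ almost surely, $H$ is a.s.\ symmetric tridiagonal with strictly positive subdiagonal; by standard Jacobi matrix theory it then has simple spectrum, and there is a canonical bijection
\[
\Phi:\;(a_1,\ldots,a_n;\,b_1,\ldots,b_{n-1}) \;\longleftrightarrow\; (\lam_1<\cdots<\lam_n;\,q),\qquad q\in S^{n-1}_{+},
\]
where $a_i=H_{ii}$, $b_i=H_{i,i+1}>0$, and $q=(q_1,\ldots,q_n)$ is the first row of the orthogonal diagonalizer $Q$ in $H=Q\Lambda Q^{T}$, sign-normalized so that $q_i>0$ for all $i$.

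First I would write the joint density of $(g_i,X_i)$ as the product of Gaussian and chi densities and re-express everything in terms of $H$-entries via $g_i=\sqrt{\al}\,a_i$ and $X_j=\sqrt{2\al}\,b_{n-j}$. The quadratic exponent collapses to $-\tfrac{\al}{2}\sum\lam_i^2$ by orthogonal invariance of $\tr(H^2)$, while the polynomial prefactor coming from $\prod X_i^{i\beta-1}$ becomes, after reindexing, $\prod_{i=1}^{n-1} b_i^{(n-i)\beta-1}$ up to explicit constants in $\al,\beta$.

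Next, I would compute the Jacobian of $\Phi^{-1}$ and substitute. The two classical identities that drive the computation are
\[
\prod_{i=1}^{n-1} b_i^{\,n-i} \;=\; |\Delta_n(\lam)|\,\prod_{i=1}^{n} q_i\qquad\text{and}\qquad \Big|\det\tfrac{\pa(a,b)}{\pa(\lam,q)}\Big|\;=\;\frac{\prod b_i}{\prod q_i}.
\]
Combined with the factor $\prod b_i^{(n-i)\beta-1}$, the density in spectral coordinates becomes, up to a $(\lam,q)$-independent constant,
\[
e^{-\tfrac{\al}{2}\sum\lam_i^2}\;|\Delta_n(\lam)|^{\beta}\;\prod_{i=1}^{n} q_i^{\,\beta - 1}\,d\lam\,d\sigma(q),
\]
which factorizes into a $\lam$-marginal proportional to $e^{-\frac{\al}{2}\sum\lam_i^2}|\Delta_n(\lam)|^{\beta}$ and a Dirichlet-type factor on $S^{n-1}_+$. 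Integrating $q$ out via the standard Dirichlet integral yields only a $\lam$-independent normalization, so the marginal law of $(\lam_1,\ldots,\lam_n)$ is exactly $P_{n,\al,\beta}$.

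The main obstacle is proving the pair of identities above. The product formula $\prod b_i^{n-i}=|\Delta_n(\lam)|\prod q_i$ is established by induction on $n$ using the three-term recurrence satisfied by the characteristic polynomials $p_k(x)$ of the leading $k\times k$ principal submatrices of $H$, combined with the interpretation of $q_i^2$ as the squared weight of $e_1$ in the spectral decomposition of $H$ (equivalently, $q_i^2 = p_{n-1}(\lam_i)/\prod_{j\neq i}(\lam_i-\lam_j)$ up to sign). The Jacobian formula follows from the same induction or from a direct row-reduction of $\pa(a,b)/\pa(\lam,q)$, and carefully tracking all constants and signs through the recursion so that the final normalizer matches $Z_{n,\al,\beta}$ is where most of the technical work lies.
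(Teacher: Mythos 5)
This theorem is not proved in the paper at all: it is imported verbatim from Dumitriu and Edelman \cite{MatrixModelBetaEnsemble} (see also Theorem 4.5.35 of \cite{cupbook}), so there is no in-paper proof to compare against. Your sketch is, in outline, exactly the original Dumitriu--Edelman argument: pass to the spectral coordinates $(\lam,q)$ of the Jacobi matrix, use the two classical identities $\prod_{i=1}^{n-1} b_i^{\,n-i}=|\Delta_n(\lam)|\prod_{i=1}^n q_i$ and $\bigl|\det \pa(a,b)/\pa(\lam,q)\bigr|=\prod_i b_i/\prod_i q_i$, observe that $\tr(H^2)=\sum_i\lam_i^2$ handles the Gaussian weight, and integrate out the Dirichlet factor $\prod_i q_i^{\beta-1}$ on the positive orthant of the sphere. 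The exponent bookkeeping you give ($\prod_i b_i^{(n-i)\beta-1}$ times the Jacobian factor $\prod_i b_i$ yielding $|\Delta_n(\lam)|^{\beta}\prod_i q_i^{\beta-1}$) is correct, so the proposal is sound modulo the two identities you defer, which are indeed the technical core and are proved in the cited reference by the characteristic-polynomial recursion you describe. One small caveat: the statement as reproduced in the paper allows $\beta=0$, where $X_i\sim\chi(0)$ degenerates to the point mass at $0$ and your bijection (which requires strictly positive off-diagonals) does not apply; that case must be treated separately, but it is trivial since $H$ is then diagonal with i.i.d.\ Gaussian entries, matching $P_{n,\al,0}$.
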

	It makes the connection between the particles of law $P_{n,\al,\beta}$ and the spectrum of $H$. By trace invariance, we can easily access further information: when $\ds{\al \sim 1+\frac{n\beta}{2}}$, the empirical spectral distribution $\ds{ L_n:= \ff{n}\sum_{i=1}^{n}\delta_{\{\lam_i\}} }$ of $H_{n,\al,\beta}$ has asymptotic first moment $0$ and second moment $1$. This convergence motivates the choice $\al\sim 1$ when $\ds{\beta \ll \ff{n}}$. 
 
	In \cite{PoissonTempLow}, with the choice $\ds{\al\sim 1+\frac{n\beta}{2}}$, the authors proved under the assumption of simultaneous limit $\ds{n\beta_n\xrightarrow[]{}2\gamma}$ as $n\to +\infty$, a continuous asymptotic interpolation for the empirical spectral measure between the Wigner semicircle law $(\gamma\to +\infty)$ and the Gaussian distribution $(\gamma=0)$. The latter case is of our interest and particularly to the setting $\ds{\beta \ll \ff{n}}$, they proved that: \begin{align}
\label{flo}\ff{n}\sum_{i=1}^{n}f(\lam_i) \xrightarrow[n\to\infty]{\p} \int_{\R} \ff{\sqrt{2\pi}}f(x) e^{-\frac{x^2}{2}} \mathrm{d}x,\qquad \qquad \forall f\in \mathcal{C}_b(\R).
	\end{align} This convergence also justifies the choice $\al=1$ in our model for $\ds{\beta \ll \ff{n}}$. 
	
	Such transition from Gaussian to Wigner distribution is furthermore investigated in \cite{allez2012invariant,Duy1}. The limiting empirical eigenvalue density in the double limit $\ds{n\beta_n\xrightarrow[]{}2\gamma\geq 0}$ is derived as a family of densities with parameter $\gamma\geq 0$. Each of the papers \cite{allez2012invariant,PoissonTempLow,Duy1} although provides different computations, hence giving rise to new identities which seem difficult to prove directly.
	\subsection{Correlation functions and Poisson convergence}
	The theorem we intend to prove will stem from the following result, which thereby makes it the cornerstone of our demonstration. It ensures that under pointwise convergence of the correlation functions and some uniform bound on it, the initial point process converges to a Poisson process. 
	\begin{Prop}[Benaych-Georges, P\'ech\'e, \cite{PoissonTempLow}]\label{tool}
		Let $X$ be a locally compact Polish space and $\mu$ a Radon measure on $X$. Let $(\lam_1,...,\lam_n)$ be an exchangeable random vector taking values in $X$ with density $\rho_n$ with respect to $\mu^{\otimes n}$. For $1\leq k \leq n$, we define the $k$-th correlation function on $X^k$: \begin{align}\label{def_corfct_tool}
R^n_k(x_1,...,x_k):=\frac{n!}{(n-k)!}\int_{(x_{k+1},...,x_n)\in X^{n-k}}\rho_n(x_1,...,x_n)\mathrm{d}\mu^{\otimes(n-k)}(x_{k+1},...,x_{n}).
		\end{align}
		Suppose there exists $\theta\geq 0$ independent of $n$ such that: 
\begin{itemize}
		\item For $1\leq k<+\infty$ fixed integer, on $X^k$, we have the pointwise convergence: 
		\begin{align*}
R^n_k(x_1,...,x_n)\xrightarrow[n\to\infty]{}\theta^k. 
		\end{align*}
		\item For each compact $K\subset X$, there exists $\theta_K>0$ such that for any integer $n\geq 1$ large enough, any integer $k\geq 1$, on $K^k$, we have: \begin{align*}
1_{\{k\leq n\}} R^n_k(x_1,...,x_k)&\leq \theta^k_K.
		\end{align*}
		Then, the point process $\ds{\mathcal{P}_n:=\sum_{i=1}^{n}\delta_{\lam_i}}$ converges in distribution to a Poisson point process with intensity $\theta \mathrm{d}\mu$ as $n\to +\infty$.
		
		$\ds{}$
\end{itemize}		
	\end{Prop}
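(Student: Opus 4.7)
The natural strategy is to work with the Laplace functional of $\mathcal{P}_n$ and show that it converges to the Laplace functional $\exp\bigl(\theta\int(e^{-f}-1)\,d\mu\bigr)$ of the target Poisson point process with intensity $\theta\,d\mu$. Since this characterises the law of a Poisson point process on a locally compact Polish space, such convergence of Laplace functionals evaluated on nonnegative continuous $f$ with compact support implies convergence in distribution of $\mathcal{P}_n$.

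The first step is to expand the Laplace functional in terms of the correlation functions. Fix $f\geq 0$ continuous with compact support $K\subset X$ and set $g:=e^{-f}-1$, so that $g$ is supported in $K$ and takes values in $[-1,0]$. Using exchangeability and the definition \eqref{def_corfct_tool}, one obtains the finite expansion
\begin{align*}
\E\Bigl[\prod_{i=1}^{n}e^{-f(\lam_i)}\Bigr]
=\E\Bigl[\prod_{i=1}^{n}\bigl(1+g(\lam_i)\bigr)\Bigr]
=\sum_{k=0}^{n}\frac{1}{k!}\int_{X^k}\prod_{j=1}^{k}g(x_j)\,R^{n}_k(x_1,\ldots,x_k)\,d\mu^{\otimes k}.
\end{align*}
This identity follows from expanding the product and grouping by the cardinality of the index set, then using the symmetrisation of $\rho_n$ to replace ordered sums of distinct indices with the $k$-th correlation function.

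The second step is to pass to the limit term by term. Since $g$ vanishes outside the compact set $K$, each integral is effectively over $K^k$, where the uniform bound $R^{n}_k(x_1,\ldots,x_k)\leq \theta_K^k$ holds for $n$ large. Combined with the pointwise convergence $R^{n}_k\to\theta^k$, the bounded convergence theorem applied on $K^k$ (noting $\mu(K)<\infty$ as $\mu$ is Radon) gives
\begin{align*}
\frac{1}{k!}\int_{X^k}\prod_{j=1}^{k}g(x_j)\,R^{n}_k(x_1,\ldots,x_k)\,d\mu^{\otimes k}\;\xrightarrow[n\to\infty]{}\;\frac{\theta^k}{k!}\Bigl(\int_{X}g\,d\mu\Bigr)^{k}.
\end{align*}
Summing these limits reconstructs the exponential series $\exp\bigl(\theta\int g\,d\mu\bigr)$, which is exactly the Laplace functional of the desired Poisson process.

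The main technical point — and the only step where the uniform hypothesis really bites — is justifying the exchange of $\lim_n$ with the sum $\sum_{k=0}^{n}$. This is handled by dominated convergence for series: each summand is bounded in absolute value by $\frac{(\theta_K\mu(K))^k}{k!}$, since $|g|\leq \one_K$ and $R^{n}_k\leq\theta_K^k$ on $K^k$, and $\sum_k\frac{(\theta_K\mu(K))^k}{k!}=e^{\theta_K\mu(K)}<+\infty$ is an $n$-independent summable majorant. Once the limit has been computed, one invokes the standard characterisation of Poisson point processes by their Laplace functionals (on nonnegative continuous compactly supported test functions) to conclude convergence in distribution of $\mathcal{P}_n$ to the Poisson point process with intensity $\theta\,d\mu$.
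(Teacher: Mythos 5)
Your argument is correct, and it is essentially the standard proof of this kind of statement; note that the paper itself does not reprove this proposition but simply quotes it from Benaych-Georges and P\'ech\'e with a pointer to their Prop.\ 5.6, so there is no in-paper proof to compare against line by line. Your chain of steps is sound: the expansion $\E\bigl[\prod_i(1+g(\lam_i))\bigr]=\sum_{k=0}^n\frac{1}{k!}\int_{X^k}\prod_j g(x_j)\,R^n_k\,d\mu^{\otimes k}$ follows exactly as you say from exchangeability (the binomial coefficient $\binom{n}{k}$ combines with the $\frac{(n-k)!}{n!}$ hidden in the definition of $R^n_k$ to give $\frac{1}{k!}$); the term-by-term limit uses dominated convergence on $K^k$, which has finite $\mu^{\otimes k}$-measure since $\mu$ is Radon; and the interchange of $\lim_n$ with $\sum_k$ is legitimately controlled by the $n$-independent summable majorant $\frac{(\theta_K\mu(K))^k}{k!}$ supplied by the second hypothesis --- this is indeed the only place that hypothesis is needed. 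The one step you state but do not justify is the final one: that convergence of Laplace functionals on nonnegative continuous compactly supported test functions implies convergence in distribution of the point processes. This is true but is a genuine theorem (Kallenberg's characterisation of weak convergence of random measures on a locally compact second countable Hausdorff space), not a tautology, so in a complete write-up you should cite it explicitly; an alternative route, equally standard, would be to prove convergence of the joint factorial moments of the counts $\mathcal{P}_n(A_1),\ldots,\mathcal{P}_n(A_m)$ for disjoint relatively compact sets and then use that the Poisson law is determined by moments, which trades the Laplace-functional theorem for a moment-determinacy argument. Either way, no gap of substance: your proof works.
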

\begin{rmk}
The proof can be found in \cite[Prop.5.6]{PoissonTempLow} where the scheme is successfuly applied to the bulk regime when $\ds{n\beta \xrightarrow[n\to\infty]{}{2\gamma }\geq 0}$. In this paper, we inspect the edge regime by using Proposition \ref{tool} for the rescaled $\beta$-ensemble $\ds{\left( a_n\left(\lam_i-b_n \right) \right)_{1\leq i \leq n} }$ in two ways as we chose the real measure $\mu$ to be $\ds{e^{-\frac{x}{\delta}}\mathrm{d}x}$ or just the Lebesgue measure $\lam$. The two induced densities $\rho_n$ differ only by a $\delta$-dependent term. By integral linearity, the same goes for the correlation functions. We derive the mandatory conditions in both cases, leading to two types of Poisson limit, but proofs are similar. 
	\end{rmk}
	\subsection{Partition functions}\label{section}
	In this section, we list some identities, bounds and asymptotics involving partition fonctions. They will be used from time to time in the sequel of the text.

 First, we give the main formula for the partition functions. From this, we will be able to compute several asymptotics of partition functions ratio.
 
	\begin{lem}\label{fondamental_partition_function_lem} For any $\al,\beta>0$ and $n\geq 1$, the following identity holds: \begin{align}
\label{fondamental_partition_function_eq1}Z_{n,\al,\beta} = (2\pi)^{\frac{n}{2}}(n!) \al^{-\beta\frac{n(n-1)}{4}-\frac{n}{2}} \prod_{i=0}^{n-1}\frac{\Gamma\left(\left( i+1\right)  \frac{\beta}{2}\right) }{\Gamma\left( \frac{\beta}{2}\right) } .
		\end{align}
				If $\beta\geq 0$, one has also: 
		\begin{align}\label{fondamental_partition_function_eq2}
		Z_{n,\al,\beta} &=(2\pi)^{\frac{n}{2}} \al^{-\beta\frac{n(n-1)}{4}-\frac{n}{2}} \prod_{i=1}^{n}\frac{\Gamma\left(1+  \frac{i\beta}{2}\right) }{\Gamma\left(1+ \frac{\beta}{2}\right) }.\end{align}
	\end{lem}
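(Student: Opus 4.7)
The plan is to reduce the computation to a standard Mehta--Selberg type integral and then manipulate the resulting Gamma functions to obtain the two stated forms.

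First, I would perform the change of variables $\mu_i := \sqrt{\al}\,\lam_i$ in the integral defining $Z_{n,\al,\beta}$ from \eqref{definition_partition_function}. The Jacobian contributes $\al^{-n/2}$, the quadratic term $-\tfrac{\al}{2}\sum \lam_i^2$ becomes $-\tfrac{1}{2}\sum \mu_i^2$, and the Vandermonde factor transforms as $|\Delta_n(\lam)|^\beta = \al^{-\beta n(n-1)/4}|\Delta_n(\mu)|^\beta$ since it is homogeneous of degree $\beta\binom{n}{2}$. Hence
\[
Z_{n,\al,\beta} \;=\; \al^{-\beta n(n-1)/4 - n/2}\, Z_{n,1,\beta},
\]
so it suffices to treat $\al=1$.

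The heart of the proof is the classical Mehta integral (a specialization of Selberg's integral, see e.g.\ \cite{Mehta} or \cite[Ch.~17]{Mehta}), which states
\[
\int_{\R^n} e^{-\frac{1}{2}\sum \lam_i^2} |\Delta_n(\lam)|^\beta \prod_{i=1}^{n}\mathrm{d}\lam_i \;=\; (2\pi)^{n/2} \prod_{i=1}^{n} \frac{\Gamma\!\left(1 + \frac{i\beta}{2}\right)}{\Gamma\!\left(1+\frac{\beta}{2}\right)}.
\]
I would simply invoke this identity, which immediately gives \eqref{fondamental_partition_function_eq2} after inserting the scaling factor above; note that this form remains well-defined at $\beta=0$ and correctly specializes to $(2\pi)^{n/2}\al^{-n/2}$.

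To derive \eqref{fondamental_partition_function_eq1} from \eqref{fondamental_partition_function_eq2} when $\beta>0$, I would use the functional equation $\Gamma(1+x) = x\,\Gamma(x)$ on both numerator and denominator:
\[
\frac{\Gamma\!\left(1+\frac{i\beta}{2}\right)}{\Gamma\!\left(1+\frac{\beta}{2}\right)}
\;=\; \frac{(i\beta/2)\,\Gamma(i\beta/2)}{(\beta/2)\,\Gamma(\beta/2)}
\;=\; i \cdot \frac{\Gamma(i\beta/2)}{\Gamma(\beta/2)}.
\]
Taking the product over $i=1,\ldots,n$ produces the factor $n!$ and, after re-indexing $i\mapsto i+1$, the product $\prod_{i=0}^{n-1}\Gamma((i+1)\beta/2)/\Gamma(\beta/2)$, yielding \eqref{fondamental_partition_function_eq1}.

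The only genuine content here is the Mehta integral itself, and I would not reprove it; the classical derivation uses the orthogonality of Hermite polynomials (for $\beta=2$) or, more generally, a suitable limit of the Selberg integral, and is a well-documented result. Consequently the main ``obstacle'' is purely bookkeeping: being careful to separate the case $\beta>0$ (where \eqref{fondamental_partition_function_eq1} makes sense) from the case $\beta=0$ (where $\Gamma(\beta/2)$ diverges and only \eqref{fondamental_partition_function_eq2} is meaningful, reducing to the i.i.d.\ Gaussian partition function $(2\pi)^{n/2}\al^{-n/2}$).
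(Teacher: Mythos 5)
Your proof is correct and follows essentially the same route as the paper: both reduce to the classical Selberg/Mehta Gaussian integral and use the change of variables $\lam_i\mapsto\lam_i/\sqrt{\al}$ to extract the factor $\al^{-\beta n(n-1)/4-n/2}$. The only cosmetic difference is that you quote the Mehta integral in the form \eqref{fondamental_partition_function_eq2} and derive \eqref{fondamental_partition_function_eq1} via $\Gamma(1+x)=x\Gamma(x)$, whereas the paper starts from the $(n!)\prod\Gamma((i+1)\beta/2)/\Gamma(\beta/2)$ form; the passage between the two is exactly the functional-equation computation you give.
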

	\begin{proof}
		Let $\beta>0$. By the Selberg integral theorem in \cite{cupbook}, we have:  $$\int_{\R^n}   \left| \Delta_n(x)\right| ^\beta e^{-\ff{2}\sum_{i=1}^{n}x^2_i}\mathrm{d}x_1 \cdots \mathrm{d}x_n= (n!)(2\pi)^\frac{n}{2} \prod_{i=0}^{n-1} \frac{\Gamma\left(\left( i+1\right)  \frac{\beta}{2}\right) }{\Gamma\left( \frac{\beta}{2}\right) }  .$$
		By the change of variable $x_i = y_i\sqrt{\al}$, we get the fundamental identity on partition functions \begin{align*}
		Z_{n,\al,\beta} &= (2\pi)^{\frac{n}{2}}(n!) \al^{-\beta\frac{n(n-1)}{4}-\frac{n}{2}} \prod_{i=0}^{n-1}\frac{\Gamma\left(\left( i+1\right)  \frac{\beta}{2}\right) }{\Gamma\left( \frac{\beta}{2}\right) }
		\\&=(2\pi)^{\frac{n}{2}} \al^{-\beta\frac{n(n-1)}{4}-\frac{n}{2}} \prod_{i=1}^{n}\frac{\Gamma\left(1+  \frac{i\beta}{2}\right) }{\Gamma\left(1+ \frac{\beta}{2}\right) }.
		\end{align*} The case $\beta=0$ is easily treated.
	\end{proof}
	We are now ready to prove several results needed later.
	
	\begin{lem}\label{partition_function_0}
		Fix an integer $1\leq k<+\infty$ and a real number $0<\al<+\infty$. Let $\beta\geq 0$ such that $n\beta \ll 1$. Let any $(\delta_n)$ positive real sequence such that $\ds{ {\log(\delta_n)}{}\ll \log(n) }$. Then,
		\begin{align}\label{partition_function_01}
\frac{Z_{n-k,\al,\beta}}{Z_{n,\al,\beta}} &=\left( 1+o(1)\right)  \left( 2\pi \right)^{-\frac{k}{2} } \al^{\frac{k}{2}}  
		\end{align}
		\begin{align}\label{partition_function_02}
\frac{Z_{n-k,\al-\frac{k\beta}{4b^2_n},\beta}}{Z_{n-k,\al,\beta}}&=1+o(1).
		\end{align}
	\end{lem}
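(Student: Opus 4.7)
The plan is to invoke the explicit product formula \eqref{fondamental_partition_function_eq2} and reduce both identities to elementary asymptotics of the Gamma function and of power functions.

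For identity \eqref{partition_function_01}, I would write the ratio by dividing the two closed-form expressions term-by-term. The $(2\pi)^{n/2}$ factors collapse to $(2\pi)^{-k/2}$, the $\al$-exponents combine to $\al^{\frac{\beta k(2n-k-1)}{4}+\frac{k}{2}}$, and the Gamma products telescope to $\frac{\Ga(1+\beta/2)^{k}}{\prod_{i=n-k+1}^{n}\Ga(1+i\beta/2)}$, which gives
$$\frac{Z_{n-k,\al,\beta}}{Z_{n,\al,\beta}} = (2\pi)^{-k/2}\,\al^{\frac{\beta k(2n-k-1)}{4}+\frac{k}{2}}\,\frac{\Ga(1+\beta/2)^{k}}{\prod_{i=n-k+1}^{n}\Ga(1+i\beta/2)}.$$
It then remains to check, as $n\to\infty$, that (a) $\al^{\frac{\beta k(2n-k-1)}{4}}\to 1$, which follows from $n\beta\ll 1$ for $k$ fixed; and (b) every Gamma factor converges to $1$, which is immediate from continuity of $\Ga$ at $1$ since $i\beta/2\le n\beta/2\to 0$ for every $i\le n$.

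For identity \eqref{partition_function_02}, the observation is that only the prefactor $\al^{-\beta(n-k)(n-k-1)/4-(n-k)/2}$ depends on $\al$; the $2\pi$ factor and the entire Gamma product cancel between numerator and denominator. Setting $\al_n:=\al-\frac{k\beta}{4b_n^2}$ (positive for large $n$ since $\al>0$ is fixed and $\beta\to 0$), the ratio reduces to $(\al_n/\al)^{-\beta(n-k)(n-k-1)/4-(n-k)/2}$. Taking logarithms and expanding $\log(1-\frac{k\beta}{4\al b_n^2})=-\frac{k\beta}{4\al b_n^2}(1+o(1))$, the log of the ratio is asymptotic to
$$\left(\beta\,\frac{(n-k)(n-k-1)}{4}+\frac{n-k}{2}\right)\cdot\frac{k\beta}{4\al b_n^2}.$$
The dominant contribution is of order $\frac{n\beta}{\log n}$ and the subdominant one of order $\frac{(n\beta)^2}{\log n}$, using $b_n\sim\sqrt{2\log n}$; both vanish under $n\beta\ll 1$, so the ratio tends to $1$.

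No serious obstacle is anticipated: once the closed-form expression of \lemm{fondamental_partition_function_lem} is in place, both statements reduce to standard asymptotics. The only mild care point is in \eqref{partition_function_02}, where one must separately control the linear-in-$n$ and quadratic-in-$n$ contributions to the exponent and rely on the factor $1/b_n^2\sim 1/(2\log n)$ to kill both of them under $n\beta\ll 1$.
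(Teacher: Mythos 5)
Your proposal is correct and follows essentially the same route as the paper: both proofs plug the closed-form Selberg expressions of Lemma \ref{fondamental_partition_function_lem} into the ratios and reduce everything to elementary asymptotics of the $\al$-powers and Gamma factors, with the same Taylor expansion of $\log(1-\frac{k\beta}{4\al b_n^2})$ for \eqref{partition_function_02}. The only (cosmetic) difference is that for \eqref{partition_function_01} you use \eqref{fondamental_partition_function_eq2} and continuity of $\Ga$ at $1$, which handles $\beta=0$ and $\beta>0$ uniformly, whereas the paper uses \eqref{fondamental_partition_function_eq1} and the blow-up $\Ga(u)\sim 1/u$ to cancel the factorial ratio, treating $\beta=0$ separately.
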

	\begin{proof}
		For $u\ll  1$, recall the equivalence of the Gamma function near the origin: $$\Gamma(u)=\ff{u}\left( 1+o(1)\right) \gg 1. $$ 
		Using equation (\ref{fondamental_partition_function_eq1}) of Lemma \ref{fondamental_partition_function_lem}, we compute the ratio (\ref{partition_function_01}) for $\beta>0$: \begin{align*}
		\frac{Z_{n-k,\al,\beta}}{Z_{n,\al,\beta}} &= \left( 2\pi \right)^{-\frac{k}{2} } \frac{(n-k)!}{n!} \al^{\frac{k}{2}+\frac{\beta}{4}\left(2nk -k(k+1) \right) } \prod_{i=n-k}^{n-1}\frac{ \Gamma\left( \frac{\beta}{2}\right)}{\Gamma\left(\left( i+1\right)  \frac{\beta}{2}\right)}
		\\&= \left( 1+o(1)\right)  \left( 2\pi \right)^{-\frac{k}{2} } \al^{\frac{k}{2}}.
		\end{align*}
		If $\beta =0$, the identity claimed is readily computed from equation (\ref{fondamental_partition_function_eq2}).
		
		Let us show the asymptotic (\ref{partition_function_02}).
		For $\al>0$, using (\ref{fondamental_partition_function_eq1}), we have:
		\begin{align*}
		Z_{n-k,\al-\frac{k\beta}{4b^2_n},\beta} &= (2\pi)^{\frac{n-k}{2}}(n-k!)\al^{-\beta\frac{(n-k)(n-k-1)}{4}-\frac{n-k}{2}} \left(1-\frac{k\beta}{4\al b^2_n}\right)^{-\beta\frac{(n-k)(n-k-1)}{4}-\frac{n-k}{2}} \prod_{i=0}^{n-k-1}\frac{\Gamma\left(\left( i+1\right)  \frac{\beta}{2}\right) }{\Gamma\left( \frac{\beta}{2}\right) }.
		\end{align*}
		Thus by a Taylor expansion of $\ds{x\mapsto \log(1-x)}$ around $0$:
		\begin{align*}
		\frac{Z_{n-k,\al-\frac{k\beta}{4b^2_n},\beta}}{Z_{n-k,\al,\beta}}&= \exp\left(  \left( -\beta\frac{(n-k)(n-k-1)}{4}-\frac{n-k}{2}\right)\left( -\frac{k\beta}{4\al b^2_n}+O\left( -\frac{k\beta}{4\al b^2_n}\right)^2  \right) \right) .
		\end{align*}
		The last term converges to $1$ under our hypothesis. The case $\beta=0$ is easily treated.
	\end{proof}
	\begin{lem}\label{partition_function_1}Assume $\ds{n\beta\ll 1}$. Let any $(\delta_n)$ positive real sequence such that $\ds{ {\log(\delta_n)}{}\ll \log(n) }$. Fix a positive real number $0<\al<+\infty$. There exists a sequence $(c_n)$ converging to $1$, such that for $n$ large enough,  \begin{align}\label{partition_function_1_eq1}
		\frac{Z_{n-1,\al b^2_n-\frac{\beta}{4},\beta}}{Z_{n,\al,\beta}} &\leq  c_n \sqrt{\frac{{\al}}{{2\pi}} } \left( b_n\right) ^{-\beta\frac{(n-1)(n-2)}{2}-n+1}.
		\end{align}
\end{lem}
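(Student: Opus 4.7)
The plan is to use the exact closed-form expression for partition functions provided by Lemma~\ref{fondamental_partition_function_lem} to write the ratio as a product of elementary factors, isolate the announced $b_n$-power, and show that the remaining prefactor tends to $1$. The argument yields in fact an equality with some $c_n\to 1$, and the claimed inequality follows trivially.

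Applying \eqref{fondamental_partition_function_eq2} to both numerator and denominator, the products of Gamma functions telescope, leaving only $\Gamma(1+\beta/2)/\Gamma(1+n\beta/2)$. Factoring $\alpha b_n^2 - \beta/4 = \alpha b_n^2\bigl(1 - \tfrac{\beta}{4\alpha b_n^2}\bigr)$ isolates the dominant $b_n$-contribution from a small correction; after recombining the powers of $\alpha$ and of $b_n^2$ (the $\alpha$-power simplifies to $\alpha^{\beta(n-1)/2+1/2}$), one obtains the identity
\[
\frac{Z_{n-1,\alpha b_n^2 - \beta/4,\beta}}{Z_{n,\alpha,\beta}} \;=\; \sqrt{\frac{\alpha}{2\pi}}\; b_n^{-\beta(n-1)(n-2)/2 - (n-1)} \cdot c_n,
\]
where
\[
c_n \;:=\; \alpha^{\beta(n-1)/2} \left(1 - \frac{\beta}{4\alpha b_n^2}\right)^{-\beta(n-1)(n-2)/4 - (n-1)/2} \frac{\Gamma(1+\beta/2)}{\Gamma(1+n\beta/2)}.
\]

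It remains to verify that $c_n\to 1$ under $n\beta\ll 1$. The factor $\alpha^{\beta(n-1)/2}$ tends to $1$ because $\alpha$ is fixed and $n\beta\to 0$. The Gamma ratio tends to $1$ by continuity of $\Gamma$ at $1$, since both $\beta/2$ and $n\beta/2$ vanish. For the middle factor, take its logarithm and apply $\log(1-x) = -x + O(x^2)$ as $x\to 0$. The leading term is
\[
\left(\beta\frac{(n-1)(n-2)}{4} + \frac{n-1}{2}\right)\cdot \frac{\beta}{4\alpha b_n^2} \;\sim\; \frac{(n\beta)^2}{32\alpha\log n} + \frac{n\beta}{16\alpha\log n},
\]
where I used $b_n^2 \sim 2\log n$, a consequence of $\log(\delta_n)\ll \log(n)$ obtained by direct expansion of the definition of $b_n$. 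Both contributions vanish under $n\beta\ll 1$, and the higher-order terms in the expansion of $\log(1-x)$ are negligible by the same reasoning. The case $\beta=0$ is immediate from $Z_{n,\alpha,0}=(2\pi/\alpha)^{n/2}$.

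The calculation is essentially bookkeeping; there is no substantial obstacle. The only care required is in tracking the algebra when recombining the exponents of $\alpha$ and $b_n$ after the factorization $\alpha b_n^2 - \beta/4 = \alpha b_n^2(1 - \beta/(4\alpha b_n^2))$, and in verifying that the logarithmic expansion above does converge to $0$ using $(n\beta)^2\to 0$ and $n\beta/\log n \to 0$, both of which follow from the hypothesis $n\beta\ll 1$.
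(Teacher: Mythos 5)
Your proof is correct and follows essentially the same route as the paper: both start from the closed form \eqref{fondamental_partition_function_eq2}, factor $\al b_n^2-\frac{\beta}{4}=\al b_n^2\bigl(1-\frac{\beta}{4\al b_n^2}\bigr)$, and arrive at the same exponent of $\al$ and the same residual factors. The only (cosmetic) difference is that you show the whole prefactor $c_n\to 1$ directly via the expansion of $\log(1-x)$, whereas the paper controls the middle factor with the inequality $\frac{1}{1-x}\le 4^x$ and bounds the Gamma ratio by a constant; your version actually yields the slightly stronger statement that the ratio \emph{equals} $c_n\sqrt{\al/(2\pi)}\,b_n^{-\beta(n-1)(n-2)/2-n+1}$ with $c_n\to 1$.
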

\begin{proof} Note that our assumptions imply that the partition function $\ds{Z_{n-1,\al b^2_n-\frac{\beta}{4},\beta}}$ is well defined since $\ds{\al b^2_n-\frac{\beta}{4}>0}$. 
	From the identity (\ref{fondamental_partition_function_eq2}), we have: \begin{align*}
	Z_{n-1,\al b^2_n-\frac{\beta}{4},\beta}&=(2\pi)^{\frac{n-1}{2}} \left( \al b^2_n-\frac{\beta}{4} \right) ^{-\beta\frac{(n-1)(n-2)}{4}-\frac{n-1}{2}} \prod_{i=1}^{n-1}\frac{\Gamma\left(1+ \frac{i\beta }{2}\right) }{\Gamma\left( 1+\frac{\beta}{2}\right) }
	\end{align*}
	and we can compute the ratio:
	\begin{align*}
	\frac{Z_{n-1,\al b^2_n-\frac{\beta}{4},\beta}}{Z_{n,\al,\beta}}&=\left(  b_n\right) ^{-\beta\frac{(n-1)(n-2)}{2}-n+1} \al^{\frac{(n-1)\beta}{2}+\ff{2}}\left( 1-\frac{\beta}{4\al b^2_n} \right) ^{-\beta\frac{(n-1)(n-2)}{4}-\frac{n-1}{2}} \frac{\Gamma\left( 1+\frac{\beta}{2}\right) }{\sqrt{2\pi}\Gamma\left(1+\frac{n\beta}{2} \right) }.
	\end{align*}
	We apply the following inequality: \begin{align}\label{ineq4^x}
 \ff{1-x}\leq 4^x, \qquad x\in [0,\ff{2}],
	\end{align} with $\ds{x=\frac{\beta}{4\al b^2_n} \leq\ff{2} \iff \frac{\beta}{2}\leq \al b^2_n}$. This inequality is true when $n$ is large enough. Thus,
	\begin{align*}
	\frac{Z_{n-1,b^2_n-\frac{\beta}{4},\beta}}{Z_{n,\beta}}&\leq c_n \frac{\Gamma\left( 1+\frac{\beta}{2}\right) }{\sqrt{2\pi}\Gamma\left(1+\frac{n\beta}{2} \right) }  \left(  b_n\right) ^{-\beta\frac{(n-1)(n-2)}{2}-n+1} \sqrt{\al},
	\end{align*} where we have set:		$$ 		c_n :=\exp\left(\left(  {\frac{\beta^2 (n-1)(n-2)}{16\al b^2_n}+\frac{\beta(n-1)}{8\al b^2_n}}\right)  \log 4+\frac{(n-1)\beta}{2}\log \al \right).$$ It is clear that the latter sequence converges to $1$ from our hypothesis on $\beta$. 
	
	Besides, the Gamma function has local minimum at $\sim 0.8$ with value $\approx 1.44$, it follows that for $\beta\ll 1$, $$\frac{\Gamma\left( 1+\frac{\beta}{2}\right) }{\sqrt{2\pi}\Gamma\left(1+\frac{n\beta}{2} \right) } \leq \frac{\Gamma\left( 1+\frac{\beta}{2}\right) }{\sqrt{2\pi}}\leq \frac{\Gamma\left( 2\right) }{\sqrt{2\pi} }  = \ff{\sqrt{2\pi}}.$$
\end{proof}
The next result states uniform bounds over $k\leq n$ for ratios of partition functions in connection with second condition of Proposition \ref{tool}.
	\begin{lem}\label{partition_function_3} Let $\beta\geq 0$ and $(\delta_n)$ positive real sequence such that $\ds{ {\log(\delta_n)}\ll \log(n) }$. Assume $\ds{\beta \ll \ff{n}}$. Let $k$ an integer such that $1\leq k\leq n$. Then for $n$ large enough,
		\begin{align}\label{partition_function_31}
\frac{Z_{n-k,1-\frac{k\beta}{4b^2_n},\beta}}{Z_{n-k,\beta}}&\leq 4^k 
		\end{align}
		\begin{align}\label{partition_function_32}
\frac{Z_{n-k,\beta}}{Z_{n,\beta}} &\leq \left(\sqrt{ \frac{2}{{\pi}}}\right)^k.
		\end{align}
	\end{lem}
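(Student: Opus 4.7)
My plan is to establish both inequalities directly from the fundamental partition function formula \eqref{fondamental_partition_function_eq2} of Lemma \ref{fondamental_partition_function_lem}, by forming the ratios and exploiting cancellations between the Gamma products, then carefully bounding what remains under the regime $n\beta \ll 1$ and $b_n \sim \sqrt{2\log n}$.

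\textbf{Proof of \eqref{partition_function_31}.} Applied to $Z_{n-k,1-\tfrac{k\beta}{4b_n^2},\beta}$ and $Z_{n-k,\beta}$, formula \eqref{fondamental_partition_function_eq2} makes all Gamma factors and the $(2\pi)^{(n-k)/2}$ prefactor cancel. What remains is only the $\alpha$-dependent power, so
\begin{align*}
\frac{Z_{n-k,1-\tfrac{k\beta}{4b_n^2},\beta}}{Z_{n-k,\beta}} \eq \left(1-\tfrac{k\beta}{4b_n^2}\right)^{-\beta\tfrac{(n-k)(n-k-1)}{4}-\tfrac{n-k}{2}}.
\end{align*}
Since $k\leq n$ and $n\beta\ll 1$ while $b_n^2\sim 2\log n$, for $n$ large we have $\tfrac{k\beta}{4b_n^2}\leq \tfrac12$, hence the inequality \eqref{ineq4^x} applies. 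Bringing it into the exponent gives the bound $4^{E_{n,k}}$ with
$$E_{n,k}\lee k\cdot\Bigl(\tfrac{\beta^2 (n-k)(n-k-1)}{16 b_n^2}+\tfrac{\beta(n-k)}{8 b_n^2}\Bigr)\lee k\cdot\Bigl(\tfrac{(n\beta)^2}{16 b_n^2}+\tfrac{n\beta}{8 b_n^2}\Bigr).$$
Both terms in the parenthesis are $o(1)$ under our hypotheses, so $E_{n,k}\leq k$ for $n$ large enough, uniformly in $1\leq k\leq n$. This gives \eqref{partition_function_31}.

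\textbf{Proof of \eqref{partition_function_32}.} Setting $\alpha=1$ in \eqref{fondamental_partition_function_eq2}, the $\alpha$-factors disappear and
\begin{align*}
\frac{Z_{n-k,\beta}}{Z_{n,\beta}} \eq (2\pi)^{-k/2}\prod_{i=n-k+1}^{n}\frac{\Gamma\left(1+\tfrac{\beta}{2}\right)}{\Gamma\left(1+\tfrac{i\beta}{2}\right)}.
\end{align*}
Because $\sqrt{2/\pi}=2\cdot(2\pi)^{-1/2}$, it suffices to show that the product is $\leq 2^k$. Since $n\beta\ll 1$, each argument $1+i\beta/2\in[1,1+n\beta/2]$ lies in a small neighborhood of $1$ where $\Gamma$ is continuous with $\Gamma(1)=1$. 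Thus uniformly in $n-k+1\leq i\leq n$, the ratio $\Gamma(1+\beta/2)/\Gamma(1+i\beta/2)$ tends to $1$, and is in particular bounded above by $2$ for $n$ large. Taking the product over $k$ factors yields $2^k$, proving \eqref{partition_function_32}.

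\textbf{Expected obstacle.} There is essentially no obstacle once the formula is written: the entire difficulty reduces to checking that the exponent produced by \eqref{ineq4^x} in part (1) stays below $k$ \emph{uniformly in $k\leq n$}, which is the reason the growth constraint $n\beta\ll 1$ (together with $b_n\to\infty$) is invoked. Part (2) is softer and only uses continuity of $\Gamma$ at $1$; the required uniformity in $i$ is automatic since $i\beta/2\leq n\beta/2\to 0$.
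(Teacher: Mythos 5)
Your proof is correct and follows essentially the same route as the paper: both inequalities are read off from formula \eqref{fondamental_partition_function_eq2}, the first via the elementary bound \eqref{ineq4^x} with the exponent controlled uniformly in $k\leq n$ by $\frac{(n\beta)^2}{16b_n^2}+\frac{n\beta}{8b_n^2}=o(1)$, and the second by bounding each Gamma ratio by $2$. The only cosmetic difference is that in the second part the paper invokes the explicit location and value of the local minimum of $\Gamma$ to get $\tfrac12\leq\Gamma(1+\tfrac{i\beta}{2})\leq 1$, whereas you use continuity of $\Gamma$ at $1$ together with the uniform convergence $i\beta/2\leq n\beta/2\to 0$; both are valid.
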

	\begin{proof} Since the case $\beta=0$ can be easily treated, we only consider $\beta>0$.
		From our hypothesis, $k\beta$ is less than $1$ when $n$ is large enough and:
		$$\frac{k\beta}{4b^2_n}\leq \ff{2}\iff  k\beta \leq 2b^2_n \qquad \text{which is true.}$$
		$$\frac{\beta}{4b^2_n}\left( \beta\frac{(n-k)(n-k-1)}{4}+\frac{n-k}{2}\right)\leq \frac{\left( n\beta\right)^2 }{16b^2_n}+\frac{n\beta}{8b^2_n}\leq 1. $$
		Using (\ref{fondamental_partition_function_eq2}) to compute the ratio and applying inequality (\ref{ineq4^x}), we have:
		\begin{align*}
		\frac{Z_{n-k,1-\frac{k\beta}{4b^2_n},\beta}}{Z_{n-k,\beta}}&= \left(1-\frac{k\beta}{4 b^2_n}\right)^{-\beta\frac{(n-k)(n-k-1)}{4}-\frac{n-k}{2}}
		\\&\leq \exp\left( {\frac{k\beta}{4 b^2_n}\left( \beta\frac{(n-k)(n-k-1)}{4}+\frac{n-k}{2}\right)\log\left(4 \right)  }\right)  
		\leq 4^k. 
		\end{align*}
		We prove the second statement. From the identity (\ref{fondamental_partition_function_eq2}) with $\al=1$, we get: \begin{align*}
		\frac{Z_{n-k,\beta}}{Z_{n,\beta}} &= \left( 2\pi \right)^{-\frac{k}{2} } \prod_{i=n-k+1}^n \frac{\Gamma(1+\frac{\beta}{2})}{\Gamma(1+\frac{i\beta}{2})} .
		\end{align*}
		The Gamma function has local minimum at $\approx 1.46$ with value $\approx 0.8$, it follows that for any $i\leq n$, since $\beta\ll 1$, $$\ff{2}\leq \Gamma\left(1+\frac{i\beta}{2} \right)  \leq \Gamma\left(1+\frac{\beta}{2}  \right)\leq 1.$$ Hence, $$\prod_{i=n-k+1}^n \frac{\Gamma(1+\frac{\beta}{2})}{\Gamma(1+\frac{i\beta}{2})}\leq 2^k .$$
	\end{proof}
		At last, we prove the previously stated lemma which compares the partition functions between different regime of $\beta$: 
			\begin{proof}[Proof of Lemma \ref{contigu}]
				Denoting $\gamma$ the Euler constant, recall that for $x\ll 1$: $$\log \Gamma \left( 1+x\right)  = -\gamma x +\frac{\pi^2}{12}x^2 +o(x^3). $$
				Remark that for any $k\geq 1$, one has: $$ n^k \beta^{k-1}\gg n^{k+1}\beta^k. $$
				We compute the ratios:
				\begin{align*}
				\frac{Z_{n,\beta}}{Z_{n,0}}&= \prod_{i=1}^{n}\frac{\Gamma\left( 1+\frac{i\beta}{2}\right) }{\Gamma\left( 1+\frac{\beta}{2}\right)}
				= \exp\left( -n\log \Gamma \left(1+\frac{\beta}{2} \right)+\sum_{i=1}^{n}\log \Gamma \left(1+\frac{i\beta}{2} \right)  \right) 
				\end{align*}
and,
\begin{align*}
\frac{Z_{n,\beta}}{Z_{n,\beta'}}&= \exp\left( -\sum_{i=1}^{n}\log \Gamma \left(1+\frac{i\beta'}{2} \right) +\sum_{i=1}^{n}\log \Gamma \left(1+\frac{i\beta}{2} \right) + n\log \frac{\Gamma\left( 1+\frac{\beta'}{2}\right)}{\Gamma\left( 1+\frac{\beta}{2}\right)}\right) .
\end{align*}
				Since the quantity $i\beta$ converges to $0$ uniformly in $i\leq n$, we deduce that:\begin{align*}
				\sum_{i=1}^{n}\log \Gamma \left(1+\frac{i\beta}{2} \right) &= -\frac{\gamma \beta}{2} \sum_{i=1}^{n}i +\frac{\pi^2}{48}\beta^2 \sum_{i=1}^{n}i^2 +nO\left( n^3\beta^3 \right) 
				\\&=-\frac{\gamma \beta}{8}(n^2 + n)+\frac{\pi^2 }{48}\beta^2\left(\frac{n^3}{3}+\frac{n^2}{2} +\frac{n}{6}\right) +O\left( n^4\beta^3 \right) 
				\\&= -\frac{\gamma }{8}n^2 \beta \left( 1+o(1)\right) .
				\end{align*}
				Besides, the log-Gamma expansion and hypothesis $n\beta \vee n\beta' \ll 1$ imply that: $$ n\log {\Gamma\left( 1+\frac{\beta'}{2}\right)}-n\log{\Gamma\left( 1+\frac{\beta}{2}\right)} \xrightarrow[n\to +\infty]{} 0. $$
			    We deduce that:  $$
				\frac{Z_{n,\beta}}{Z_{n,0}}= \exp\left(- \frac{\gamma }{8}n^2 \beta   \left( 1+o(1)\right) \right) 
			,\qquad 
				\frac{Z_{n,\beta}}{Z_{n,\beta'}}= \exp\left( \frac{\gamma }{8}n^2 \left( \beta'-\beta\right)  \left( 1+o(1)\right) \right) 
				.$$ The claims readily follow.
			\end{proof}
	\subsection{Estimates: bulk and largest eigenvalues}
	In the section, we establish some estimates on the eigenvalues $(\lam_i)_{1\leq i\leq n}$ of $H_{n,\al,\beta}$, which are $P_{n,\al,\beta}$-distributed. Since the particles are exchangeable, every estimate will concern $\lam_1$. \\ \\ We give exponential type bound on the probability of a scaled eigenvalue to be larger than any arbitrary value. Same-wise, an exponential estimate for the probability of $\lam_1$ to be as close as we want to any value is given.
 
	These estimates will be crucial for the analysis of the integral term $\tilde{R}^n_k$, which presents itself as the expectation of some functional of $(\lam_i)$. The link is made through to the identity $$\E\left|X \right|= \int_{0}^{+\infty}\p\left(\left|X\right|\geq t \right) \mathrm{d}t.$$ 
	We begin with a technical but fundamental lemma. 
	\begin{lem}\label{lem_technique}
		For any $a,b\in\R$ and $\beta>0$, one has: \begin{align}
\label{bound_technique}\left| a+b\right|^\beta \leq 2^\beta e^{\beta \frac{a^2 + b^2}{8}} .
		\end{align}
	\end{lem}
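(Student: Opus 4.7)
The plan is to reduce the stated inequality, whose right-hand side carries the parameter $\beta$ in a uniform way (it appears only in the exponent $\beta$ of each factor), to a purely quantitative inequality that does not involve $\beta$. Taking $\beta$-th roots on both sides, the target \eqref{bound_technique} is equivalent to the single $\beta$-free statement
\[
|a+b| \;\le\; 2\, e^{\frac{a^2+b^2}{8}},
\]
so the main task is to prove this clean inequality for all real $a,b$, and then raise both sides to the power $\beta>0$.

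To obtain this, I would first bound $|a+b|$ by a symmetric function of $a^2+b^2$. The triangle inequality combined with the elementary $(|a|+|b|)^2\le 2(a^2+b^2)$ (equivalently, Cauchy--Schwarz or AM--QM) gives
\[
|a+b|^2 \;\le\; 2(a^2+b^2).
\]
It therefore suffices to show that $\sqrt{2(a^2+b^2)} \le 2\, e^{(a^2+b^2)/8}$, which after squaring becomes $2(a^2+b^2) \le 4\, e^{(a^2+b^2)/4}$, i.e.\ the one-variable estimate
\[
s \;\le\; 2\, e^{s/4}\qquad (s\ge 0),
\]
with $s:=a^2+b^2$. Setting $u:=s/4\ge 0$, this reduces to $2u \le e^u$, which follows from the Taylor lower bound $e^u\ge 1+u+\frac{u^2}{2}$ and the identity $1+u+\frac{u^2}{2}-2u = \frac{(u-1)^2+1}{2} > 0$.

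There is no real obstacle in this argument; the only points worth checking are that each intermediate step is monotone in the relevant nonnegative quantity, so that composing them legitimately yields the stated bound. Once $|a+b|\le 2\, e^{(a^2+b^2)/8}$ is established, the conclusion follows immediately by raising both sides to the power $\beta>0$, which preserves the inequality since both sides are nonnegative. Strictness of the bound (the constant $2$ rather than something sharper) is not needed for the later applications, so no effort is spent trying to optimise the constants in front of $a^2+b^2$ in the exponent.
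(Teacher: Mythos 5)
Your proof is correct and follows essentially the same route as the paper: both arguments reduce to the $\beta=1$ case, combine $(a+b)^2\le 2(a^2+b^2)$ with the elementary scalar bound $2u\le e^u$ (which the paper packages as $|x|\le 2e^{x^2/16}$ and merely recalls, whereas you verify it via the Taylor lower bound). The only difference is the order in which the two ingredients are applied, which is immaterial.
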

	\begin{proof}
		First recall two inequalities: $$\left| x\right| \leq 2e^{\frac{x^2}{16}}, \qquad (x+y)^2\leq 2x^2 + 2y^2. $$
		Applying the first inequality with $x=a+b$, then using the second one give:
		\begin{align*}
		\left| a+b\right|^\beta &\leq \left( 2e^{\frac{(a+b)^2}{16}} \right)^\beta
		\leq  \left( 2e^{\frac{a^2+b^2}{8}} \right)^\beta.
		\end{align*}
	\end{proof}
	This inequality is of interest because it roughly allows to gain quadratic sum bound $a^2+b^2$ from a quantity of type $\log \left| a+b\right|$. It provides an useful algebraic mean to upper-bound the integral term $\tilde{R}^n_k$ with a ratio of partition functions.
 
	Next, we show an estimate on the scaled top eigenvalue. This result is also established in \cite{PoissonTempLow} but in another form, more appropriate to the bulk regime. For the sake of completeness, we give its proof since our version is slightly different.
	\begin{lem}\label{top_estimate}
	Let $M>0$ such that $\al\vee n\beta \leq M$. There exists a constant $C_M>0$ such that for any $\beta,t>0$, $u\in \R$ and any $n\geq 1$ large enough,  \begin{align}\label{top_estimate_eq}
	P_{n,\al,\beta}\left(\left|u-\frac{\lam_1}{b_n} \right| \geq t \right)&\leq  C_M b_n^{\beta(n-1)-2} \frac{\exp\left( -\frac{\al}{2}\left( b^2_n-\frac{n\beta}{4\al}\right)^2 \left( t+\frac{b^2_n u}{b^2_n-\frac{n\beta}{4\al}} \right)^2\right) }{\al \left( t+\frac{b^2_n u}{b^2_n-\frac{n\beta}{4\al}} \right) }.
	\end{align} 
	\end{lem}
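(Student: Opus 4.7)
My plan is to bound the marginal density of $\lam_1$ pointwise by integrating the remaining eigenvalues against a comparison Gaussian $\beta$-ensemble, and then apply a Mills-type inequality to the resulting one-dimensional Gaussian tail.

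First, because the joint density is invariant under $\lam\mapsto -\lam$, I split the two-sided event as
\[P_{n,\al,\beta}(|u-\lam_1/b_n|\geq t) = P_{n,\al,\beta}(\lam_1\geq b_n(u+t)) + P_{n,\al,\beta}(\lam_1\geq b_n(t-u)),\]
reducing the problem to an upper tail $P(\lam_1\geq y)$ for a suitable $y>0$. To extract the Vandermonde factors involving $\lam_1$, I factor $|\Delta_n(\lam)|^\beta = |\Delta_{n-1}(\lam_2,\ldots,\lam_n)|^\beta\prod_{j\geq 2}|\lam_1-\lam_j|^\beta$ and apply Lemma \ref{lem_technique} coordinate-wise:
\[\prod_{j=2}^n|\lam_1-\lam_j|^\beta \leq 2^{\beta(n-1)}\exp\!\left(\tfrac{\beta(n-1)}{8}\lam_1^2 + \tfrac{\beta}{8}\sum_{j\geq 2}\lam_j^2\right).\]

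Combining with the Gaussian weight $e^{-\al\sum \lam_i^2/2}$ produces a density whose effective parameter is $\al-\beta/4$ on the auxiliary coordinates $(\lam_2,\ldots,\lam_n)$ and $\al-(n-1)\beta/4$ on $\lam_1$. It is cleanest to first rescale $\mu_i := \lam_i/b_n$, so that $(\mu_i)\sim P_{n,\al b_n^2,\beta}$, and carry out the analysis in $\mu$-coordinates; integrating out the auxiliary variables identifies the remaining integral with $Z_{n-1,\al b_n^2-\beta/4,\beta}$ and yields the one-dimensional bound
\[f_1^\mu(\mu_1)\leq \frac{2^{\beta(n-1)}\,Z_{n-1,\al b_n^2-\beta/4,\beta}}{Z_{n,\al b_n^2,\beta}}\,\exp\!\left(-\tfrac{1}{2}(\al b_n^2-(n-1)\beta/4)\mu_1^2\right).\]
Applying Mills' inequality $\int_y^{+\infty}e^{-cx^2/2}\,\mathrm{d}x\leq(cy)^{-1}e^{-cy^2/2}$ at $y=u+t$ (respectively $y=t-u$) and completing the square in $u$ generates the announced shift $b_n^2 u/(b_n^2-n\beta/(4\al))$ and the effective rate $b_n^2-n\beta/(4\al)$ inside the exponent.

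The partition-function ratio is then controlled uniformly through Lemmas \ref{fondamental_partition_function_lem} and \ref{partition_function_1}; the latter provides the key $b_n$-power prefactor which, combined with the Mills denominator and the elementary estimate $2^{\beta(n-1)} \leq c_M\, b_n^{\beta(n-1)}$ (valid under $n\beta\leq M$), assembles into the announced $b_n^{\beta(n-1)-2}$. Under the hypothesis $\al\vee n\beta\leq M$, all $\Gamma$-function ratios in the partition-function formulae and the corrections of order $n\beta/(\al b_n^2)$ produced when completing the square are $1+o(1)$, and are absorbed into the uniform constant $C_M$.

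The main obstacle is precisely this last algebraic matching: one must carefully keep track of all $b_n$- and $\al$-dependent factors arising from the Jacobian, the explicit Gamma-function expressions in $Z_{n-1,\al b_n^2-\beta/4,\beta}/Z_{n,\al b_n^2,\beta}$, and the Mills denominator, and verify that completing the square in $u$ produces \emph{exactly} the stated centering $t+b_n^2 u/(b_n^2-n\beta/(4\al))$ rather than only its $n\to\infty$ limit $t+u$. Retaining the explicit correction $n\beta/(4\al)$ is essential because the estimate will be applied in regimes where $n\beta/\al$ is not negligible at the relevant scale.
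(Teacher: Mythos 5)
Your overall strategy coincides with the paper's: decouple $\lam_1$ from the other coordinates via Lemma \ref{lem_technique}, identify the integral over $(\lam_2,\ldots,\lam_n)$ with $Z_{n-1,\al b_n^2-\beta/4,\beta}$, control the resulting ratio with Lemma \ref{partition_function_1}, and finish with a Mills-type Gaussian tail bound. The gap lies in how the stated centering $t+\frac{b_n^2u}{b_n^2-n\beta/(4\al)}$ is supposed to arise. In your parametrization $\mu_i=\lam_i/b_n$, the quadratic form governing $\mu_1$ (the Gaussian weight plus the correction of order $\frac{n\beta}{8}\mu_1^2$ coming from Lemma \ref{lem_technique}) is centered at $0$, and $u$ enters only through the integration limits $u+t$ and $t-u$; there is no square to complete in $u$, so Mills' inequality yields a threshold $u+t$. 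The paper instead changes variables to $z_i=\lam_i/b_n-u$ \emph{before} decoupling, so that the $u$-centered weight $e^{-\frac{\al b_n^2}{2}(z_1+u)^2}$ combines with the zero-centered correction $\frac{n\beta}{8}z_1^2$, and completing the square in $z_1$ moves the center to $-\frac{b_n^2u}{b_n^2-n\beta/(4\al)}$, which is precisely what produces the stated threshold. Since $\frac{b_n^2}{b_n^2-n\beta/(4\al)}>1$, for $u>0$ your exponential $e^{-\frac{A}{2}(u+t)^2}$ is strictly larger than the claimed one, so the ``algebraic matching'' you defer to the end cannot close: a weaker upper bound does not imply the stated inequality.

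A secondary point: after your symmetry split, the reflected term $P_{n,\al,\beta}(\lam_1\ge b_n(t-u))$ has threshold $t-u$, to which Mills' inequality does not apply when $t\le u$, and which in any case produces an exponent in $(t-u)^2$ that is not dominated by the single stated expression. The paper avoids introducing this second term by keeping the two-sided region $\{|z_1|\ge t\}$ and disposing of it in one Gaussian tail estimate after the shift. If you adopt the paper's change of variables at the outset, both issues disappear and the rest of your outline (partition-function bookkeeping, absorption of the $2^{\beta(n-1)}$ and Gamma factors into $C_M$ under $\al\vee n\beta\le M$) goes through as you describe.
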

	\begin{proof}
		Let $u\in \R$. Let $(\lam_1,...,\lam_n)$ an exchangeable family of random variables distributed according to $P_{n,\al,\beta}$. By a change of variable in (\ref{P_nab}), the family $\ds{ \left(\frac{\lam_i}{b_n}-u \right)_{1\leq i \leq n}   }$ has law: 
		$$ \frac{b_n^{n+\beta\frac{n(n-1)}{2}}}{Z_{n,\al,\beta}} \left| \Delta_n(z)\right| ^\beta e^{-\frac{\al}{2} b^2_n \sum_{i=1}^n \left( z_i+u\right)^2  } \mathrm{d}z_1 \cdots \mathrm{d}z_n.$$
		Now for $t>0$, the quantity $\ds{	\Lambda_{n,t,u}:=P_{n,\al,\beta}\left(\left|u-\frac{\lam_1}{b_n} \right| \geq t \right)}$ equals to: \begin{align*}
		  \frac{b_n^{n+\beta\frac{n(n-1)}{2}}}{Z_{n,\al,\beta}}\int_{\left| z_1\right| \geq t }\int_{\R^{n-1}}\prod_{j=2}^{n} \left|z_1-z_j \right|^\beta e^{ -\frac{\al b^2_n}{2}   \left( z_1+u\right)^2  }    \left| \Delta_{n-1}(z_2,...,z_n)\right| ^\beta  e^{-\frac{\al}{2} b^2_n \sum_{i=2}^n \left( z_i+u\right)^2  } \mathrm{d}z_1 \cdots \mathrm{d}z_n .
		\end{align*}
		The product term in the first integral involves every variables. We split this overlapping term thanks to the fundamental inequality (\ref{bound_technique}) of Lemma \ref{lem_technique}. It leads to:
		\begin{align*}
		\Lambda_{n,t,u}&\leq  \frac{b_n^{n+\beta\frac{n(n-1)}{2}}2^{n\beta}}{Z_{n,\al,\beta}}\int_{\left| z_1\right| \geq t } \exp\left(\frac{n\beta}{8}z^2_1 -\al \frac{b^2_n}{2}(z_1+u)^2 \right)   \mathrm{d}z_1 \times 
		\\ &   \times \int_{\left( z_2,...,z_n\right) \in \R^{n-1}}\left| \Delta_{n-1}(z_2,...,z_n)\right| ^\beta  \exp\left(\frac{\beta}{8}\sum_{i=2}^{n}z_i^2 - \al \frac{b^2_n}{2}\sum_{i=2}^{n}(z_i+u)^2 \right) \mathrm{d}z_2...\mathrm{d}z_n .
		\end{align*}
		The first integral term will be linked to a Gaussian tail and the second to a partition function. For this, we need to complete the square.

		Using the two following algebraic identities:
		\begin{align*}
		\frac{\beta}{8}\sum_{i=2}^{n}z_i^2 - \al \frac{b^2_n}{2}\sum_{i=2}^{n}(z_i+u)^2&= -\frac{\al}{2}\left(b^2_n-\frac{\beta}{4\al} \right)\sum_{i=2}^{n}\left(z_i +\frac{b_n^2 u }{b^2_n -\frac{\beta}{4\al}}  \right)^2 \\&\qquad  +\al \frac{b^4_n u^2}{2\left( b^2_n-\frac{\beta}{4\al}\right) } (n-1)-\al\frac{b_n^2 u^2}{2}(n-1) 
		\end{align*}
		$$\frac{n\beta}{8}z^2_1 -\al \frac{b^2_n}{2}(z_1+u)^2 =-\frac{\al}{2}\left(b^2_n-\frac{n\beta}{4\al} \right) \left(z_1+\frac{b_n^2 u}{b_n^2-\frac{n\beta}{4\al}} \right)^2 +\al\frac{b_n^4u^2}{2\left(b_n^2-\frac{n\beta}{4\al} \right) }  -\al\frac{b_n^2 u^2}{2},$$
		we can write:
		\begin{align}\label{Lambda_GZ}
		\Lambda_{n,t,u}&\leq  \frac{b_n^{n+\beta\frac{n(n-1)}{2}}2^{n\beta}}{Z_{n,\al,\beta}}e^{\al\frac{b^4_nu^2}{2\left(b_n^2-\frac{n\beta}{4\al} \right) }  -\al\frac{b^2_n u^2}{2}} G(t)Z.
		\end{align}
		where $$G(t):=\int_{\left| z_1\right| \geq t } \exp\left(-\frac{\al}{2}\left(b^2_n-\frac{n\beta}{4\al} \right) \left(z_1+\frac{b_n^2 u}{b_n^2-\frac{n\beta}{4\al}} \right)^2 \right)   \mathrm{d}z_1 $$
		\begin{align*}
		Z &:=e^{\al\frac{b^4_n u^2 (n-1)}{2\left( b^2_n-\frac{\beta}{4\al}\right) } -\al\frac{b_n^2 u^2 (n-1)}{2}  }\int_{ \R^{n-1}}\left| \Delta_{n-1}(\lam)\right| ^\beta  \exp\left(-\frac{\al}{2}\left(b^2_n-\frac{\beta}{4\al} \right)\sum_{i=2}^{n}\left(\lam_i +\frac{b_n^2 u }{b^2_n -\frac{\beta}{4\al}}  \right)^2 \right) \mathrm{d}\lam_2...\mathrm{d}\lam_{n-1}\end{align*} which is just:
		$$Z=e^{\al\frac{b^4_n u^2}{2\left( b^2_n-\frac{\beta}{4\al}\right) } (n-1)-\al \frac{b_n^2 u^2}{2}(n-1)  } Z_{n-1,\al b^2_n-\frac{\beta}{4},\beta} .$$
		We treat the Gaussian integral term $G(t)$ in (\ref{Lambda_GZ}) with two successive change of variable and symmetry, \begin{align*}
		G(t)&=\frac{2}{\sqrt{\al}\sqrt{b^2_n-\frac{n\beta}{4\al} }} \int_{ z\geq \al\left( b^2_n-\frac{n\beta}{4\al}\right) \left( t+\frac{b^2_n u}{b^2_n-\frac{n\beta}{4\al}} \right)  } \exp\left(-\frac{z^2}{2} \right)   \mathrm{d}z
		\\&\leq \frac{2}{\al\sqrt{\al}}  \frac{e^{-\frac{\al}{2}\left( b^2_n-\frac{n\beta}{4\al }\right)^2 \left( t+\frac{b^2_n u}{b^2_n-\frac{n\beta}{4\al}} \right)^2}}{\left( b^2_n-\frac{n\beta}{4\al}\right)^\frac{3}{2} \left( t+\frac{b^2_n u}{b^2_n-\frac{n\beta}{4\al}} \right) }.
		\end{align*}
		The following classical Gaussian bound is used in the last line: $$\int_{y}^{+\infty}e^{-\frac{z^2}{2}}\mathrm{d}z\leq \frac{e^{-\frac{y^2}{2}}}{y}, \qquad y>0. $$
		Finally, (\ref{Lambda_GZ}) becomes: \begin{align*}
		\Lambda_{n,t,u}&\leq b_n^{n+\beta\frac{n(n-1)}{2}}2^{n\beta+1} e^{\al \frac{b^4_n u^2}{2\left( b^2_n-\frac{\beta}{4\al}\right) } (n-1)+\al \frac{b^4_n u^2}{2\left( b^2_n-\frac{n\beta}{4\al}\right) } -\al n\frac{b_n^2 u^2}{2}}\frac{Z_{n-1,\al b^2_n-\frac{\beta}{4},\beta}}{Z_{n,\al,\beta}}   \frac{e^{-\frac{\al}{2}\left( b^2_n-\frac{n\beta}{4\al}\right)^2 \left( t+\frac{b^2_n u}{b^2_n-\frac{n\beta}{4\al}} \right)^2}}{\al^{\frac{3}{2}}\left( b^2_n-\frac{n\beta}{4\al}\right)^\frac{3}{2} \left( t+\frac{b^2_n u}{b^2_n-\frac{n\beta}{4\al}} \right) }.
		\end{align*}
		We deal with the ratio of partition functions by (\ref{partition_function_1_eq1}) of Lemma \ref{partition_function_1}, so that the last line becomes:
	\begin{align*}
	\Lambda_{n,t,u}&\leq c_n\frac{b_n^{\beta (n-1) +1}2^{n\beta+1} }{\sqrt{2\pi}\left( b^2_n-\frac{n\beta}{4\al}\right)^\frac{3}{2}}e^{\al \frac{b^4_n u^2}{2\left( b^2_n-\frac{\beta}{4\al}\right) } (n-1)+\al \frac{b^4_n u^2}{2\left( b^2_n-\frac{n\beta}{4\al}\right) } -\al n\frac{b_n^2 u^2}{2}} \frac{e^{-\frac{\al}{2}\left( b^2_n-\frac{n\beta}{4\al}\right)^2 \left( t+\frac{b^2_n u}{b^2_n-\frac{n\beta}{4\al}} \right)^2}}{ \al\left( t+\frac{b^2_n u}{b^2_n-\frac{n\beta}{4\al}} \right) }.
	\end{align*}According to Lemma \ref{partition_function_1}, the sequence $(c_n)$ converges to $1$ hence is bounded.
	
	Lastly, with the assumption $\ds{n\beta\ll1}$ and by Taylor expansion, one can show that:
	\begin{align*}
	\al \frac{b^4_n u^2}{2\left( b^2_n-\frac{\beta}{4\al}\right) } (n-1)+\al \frac{b^4_n u^2}{2\left( b^2_n-\frac{n\beta}{4\al}\right) } -\al n\frac{b_n^2 u^2}{2}\xrightarrow[n\to\infty]{}0.
	\end{align*}
	
	We conclude that, for $M$ such that $n\beta \vee \al \leq M$, there exists a constant $C_M\in (0,+\infty)$ such that, \begin{align*}
	\Lambda_{n,t,u}&\leq C_M b_n^{\beta(n-1)-2} \frac{\exp\left( -\frac{\al}{2}\left( b^2_n-\frac{n\beta}{4\al}\right)^2 \left( t+\frac{b^2_n u}{b^2_n-\frac{n\beta}{4\al}} \right)^2\right) }{\al \left( t+\frac{b^2_n u}{b^2_n-\frac{n\beta}{4\al}} \right) }.
	\end{align*}
\end{proof}
	The same method brings also an estimate for bulk eigenvalues. We state the result as in \cite{PoissonTempLow} where a proof can be found.
	\begin{lem}\label{bulk_estimate}
		Let $M>0$ such that $\al\vee n\beta \leq M$. There exists a constant $C_M>0$ such that for any $\beta>0$, $a\in \R$, $y\in (0,1)$ and any $n\geq 1$ large enough,  \begin{align}\label{bulkestimate_eq}
P_{n,\al,\beta}\left(\left|\lam_1-a \right| \leq y \right)&\leq C_M y \exp\left( \frac{n\al \beta}{2(4\al-\beta)}a^2 \right). 
		\end{align}
	\end{lem}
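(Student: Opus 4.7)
The plan is to adapt the strategy of Lemma \ref{top_estimate} (whose proof was just carried out) to a \emph{small} interval $|\lam_1 - a|\leq y$ rather than a tail. The starting point is the change of variable $z_i := \lam_i - a$, which by translation invariance of the Vandermonde determinant leaves $|\Delta_n(z)|^\beta = |\Delta_n(\lam)|^\beta$ and turns the Gaussian weight into $e^{-\frac{\al}{2}\sum (z_i + a)^2}$. This reduces the probability to an integral over $\{|z_1|\leq y\}\times \R^{n-1}$ against the shifted density, rescaled by $Z_{n,\al,\beta}^{-1}$.

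Next, I would decouple the cross terms $\prod_{j\geq 2} |z_1 - z_j|^\beta$ using Lemma \ref{lem_technique}, replacing each factor by $2^\beta e^{\beta(z_1^2 + z_j^2)/8}$. After this substitution the integrand becomes a product of a function of $z_1$ alone and a function of $(z_2,\ldots,z_n)$ alone, so the $n$-fold integral factors. In the $(n-1)$-dimensional piece, completing the square in each variable yields a term $-\frac{4\al-\beta}{8}(z_j + \frac{4\al a}{4\al-\beta})^2$ plus the constant $\frac{\al\beta a^2}{2(4\al-\beta)}$; summing over the $n-1$ indices and shifting each $z_j$ (again using Vandermonde translation invariance) reduces this factor to $e^{(n-1)\al\beta a^2/(2(4\al-\beta))}\cdot Z_{n-1,\al - \beta/4,\beta}$. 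The one-dimensional piece over $|z_1|\leq y$ involves a Gaussian with coefficient $[4\al-\beta(n-1)]/8$, which is positive for $n$ large enough thanks to $n\beta \leq M$; bounding the integrand by $1$ and using $|z_1|\leq y$ produces a factor of order $y$, while completing the square as above gives a second exponential factor of order $e^{O(\al\beta a^2/(4\al-\beta))}$ that is absorbed into the final constant (or into passing from $n-1$ to $n$ in the exponent).

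It remains to combine the pieces. Multiplying the three factors together, the probability is bounded by
\[
\frac{C_M\, y\, 2^{(n-1)\beta}\, Z_{n-1,\al-\beta/4,\beta}}{Z_{n,\al,\beta}}\,\exp\!\Bigl(\tfrac{n\al\beta\,a^2}{2(4\al-\beta)}\Bigr).
\]
Under $n\beta \leq M$, the prefactor $2^{(n-1)\beta}$ is bounded, and the partition function ratio $Z_{n-1,\al-\beta/4,\beta}/Z_{n,\al,\beta}$ is controlled by applying Lemma \ref{partition_function_0} (with $k=1$) together with the asymptotic \eqref{partition_function_02} that replaces the parameter $\al$ by $\al - \beta/4$ at negligible cost. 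This yields the desired $C_M$.

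I anticipate that the main technical nuisance is the careful bookkeeping of the quadratic-in-$a$ exponents coming out of two separate square completions (one in the $z_1$-integral, one in the $(z_2,\ldots,z_n)$-integral), and checking that their sum fits inside the single factor $\exp\bigl(\tfrac{n\al\beta a^2}{2(4\al-\beta)}\bigr)$ claimed in \eqref{bulkestimate_eq}; this relies crucially on $n\beta \leq M$ being small enough that the coefficient $4\al-\beta(n-1)$ stays comparable to $4\al-\beta$. Everything else is mechanical once the factorization afforded by Lemma \ref{lem_technique} is in place.
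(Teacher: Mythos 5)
Your proposal follows essentially the same route as the paper: decouple the cross terms with Lemma \ref{lem_technique}, complete the square in $z_2,\dots,z_n$ to produce the factor $e^{(n-1)\al\beta a^2/(2(4\al-\beta))}\,Z_{n-1,\al-\frac{\beta}{4},\beta}$, bound the $z_1$-integral by $O(y)$, and control the ratio $Z_{n-1,\al-\frac{\beta}{4},\beta}/Z_{n,\al,\beta}\le c_n\sqrt{\al/(2\pi)}$ (the paper does this by adapting the proof of Lemma \ref{partition_function_1} rather than citing Lemma \ref{partition_function_0}, which is cosmetic). One caveat on the one-dimensional piece: completing the square there would yield $\exp\bigl(\tfrac{(n-1)\al\beta a^2}{2(4\al-(n-1)\beta)}\bigr)$, not $e^{O(\al\beta a^2/(4\al-\beta))}$; this is $a$-dependent and of the wrong order, so it can be absorbed neither into $C_M$ nor into the single spare summand gained by passing from $n-1$ to $n$ in the exponent, and moreover $4\al-(n-1)\beta$ need not be positive since the hypothesis only bounds $\al$ and $n\beta$ from above. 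The correct treatment of this piece (and the paper's) is the other option you mention: on $|z_1|\le y\le 1$ the factor $e^{(n-1)\beta z_1^2/8}$ is bounded by $e^{M/8}$, and $\int_{|z_1|\le y}e^{-\frac{\al}{2}(z_1+a)^2}\,\mathrm{d}z_1\le 2y$, with no square completion and no extra $a$-dependence.
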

	\begin{proof}
		We proceed in the same spirit as the previous estimate. We apply again Lemma \ref{lem_technique} and complete the square with: $$ -\frac{\al}{2}z_i^2+\frac{\beta(z_i-u)^2}{8}=-\frac{4\al-\beta}{8}\left( z_i+\frac{\beta u}{4\al-\beta}\right) +\frac{\al \beta u^2}{2\left( 4\al-\beta\right) }.$$ It yields an upperbound on  $\ds{	P_{n,\al,\beta}\left(\left|\lam_1 -u\right| \leq y \right)}$ involving the simpler Gaussian integral: $$\int_{\left| z_1-u\right| \leq y } \exp\left(-\frac{\al}{2} z^2_1 \right)   \mathrm{d}z_1\leq 2y \exp\left(-\frac{\al}{2}(y-u)^2 \right)\leq 2y,$$ and another ratio of partitions function treated by adapting the proof of (\ref{partition_function_1_eq1}) in Lemma \ref{partition_function_1}: $$\frac{Z_{n-1,\al -\frac{\beta}{4},\beta}}{Z_{n,\al,\beta}} \leq c_n\sqrt{ \frac{\al}{{2\pi}}}, \qquad c_n\xrightarrow[n\to\infty]{}1, \qquad 0<\al<+\infty.$$ 
\end{proof}
	\section{Poisson limit for $n\beta\ll 1$ and $\al = 1$}
	This section is devoted to the proof of Theorem \ref{main} when $\ds{\beta>0}$. Namely, we consider the extreme point process $\ds{\mathcal{P}_n=\sum_{i=1}^n \delta_{a_n(\lam_i-b_n)} }$ with $(\lam_i)_{i\leq n}\sim P_{n,\beta}$, temperature regime $\ds{ \beta:=\beta_n \ll \ff{n\log( n)}}$ and $\al=1$. Our framework is the application of Proposition \ref{tool}:
	 \begin{itemize}
		\item When $\ds{\delta_n\xrightarrow[n\to\infty]{}\delta>0}$, we consider $\ds{\mu = e^{-\frac{x}{\delta}}dx}$ and $(\lam_1,\ldots,\lam_n)$ with law: $$\rho_n \mathrm{d}\mu^{\otimes n}\left( \lam_1,\ldots,\lam_n\right)  = e^{-\frac{\al}{2} \sum_{i=1}^{n}\lam^2_i} \left| \Delta_n(\lam)\right| ^\beta e^{\ff{\delta}\sum_{i=1}^{n}\lam_i} \prod_{i=1}^{n}\mathrm{d}\lam_i  .$$
		\item When $\delta_n\gg 1$, the density $\rho_n$ equals to $P_{n,\al,\beta}$ and $\mu$ is the Lesbegue measure $\lam$ on $\R$.
	\end{itemize}
	The plan, according to Proposition \ref{tool}, is to first reformulate the correlation functions in a tractable expression, and then establish their pointwise convergence to $1$. The last step is to give an uniform upper bound which will end the proof of the theorem.
 
	The case $\beta=0$ is much simpler. Following the same steps, it does not however involve the machinery of partition functions and tail bounds. So we keep it in the last subsection.
	\subsection{Formulation of the correlation functions}
	The first step is to give a satisfying expression of the correlation function $R^n_k$. From its definition, we transpose it as product of multiple terms including an integral term $\tilde{R}^n_k$. Unlike the others, this quantity is more complicated and needs careful analysis. We express the result in the case $\ds{\delta_n\xrightarrow[n\to\infty]{}\delta>0}$ and give in a subsequent remark the analog formula for the case $\ds{\delta_n\gg 1}$.
	
	\begin{lem}\label{correlation1}
		Fix $\delta>0$. Let $\al>0$, $\beta\geq 0$ and $(\lam_1,...,\lam_n)$ distributed according to $P_{n,\al,\beta}$.
		\\For $1\leq k\leq n$, the $k$-th correlation function $\ds{R^n_k(x_1,...,x_k)}$ of the point process $\ds{\sum_{i=1}^{n}\delta_{a_n(\lam_i-b_n)}}$ is: \begin{align}\label{formulation_of_corr_funct}
		 \frac{n!}{(n-k)!}a_n^{-k -\frac{\beta}{2}k\left( k-1\right)} \left| \Delta_k(x)\right| ^\beta \frac{Z_{n-k,\al,\beta}}{Z_{n,\al,\beta}} e^{ -\frac{\al}{2} \sum_{i=1}^{k}\left( \frac{x_i}{a_n}+b_n \right)^2+\ff{\delta}\sum_{i=1}^{k}x_i+k\beta(n-k)\log(b_n)}  \tilde{R}^n_k
		\end{align}
		with the quantity $\ds{\tilde{R}^n_k:=\tilde{R}^n_k(x_1,...,x_k)}$ defined as: \begin{align}\label{formulation_of_corr_tilde}
 \int_{\R^{n-k}} \exp\left( \beta \sum_{i=1}^{k}\sum_{j=1}^{n-k} \log \left|1+\frac{x_i}{a_nb_n}-\frac{z_j}{b_n} \right| \right)  \mathrm{d}P_{n-k,\al,\beta}(z_1,...,z_{n-k}). 
		\end{align}
	\end{lem}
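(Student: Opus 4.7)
The strategy is to plug the law of the rescaled family $\tilde\lam_i := a_n(\lam_i-b_n)$ into the definition \eqref{def_corfct_tool} of $R^n_k$ and then perform two successive changes of variable that isolate the partition function $Z_{n-k,\al,\beta}$ and the integrand of $\tilde R^n_k$. Using $|\Delta_n(\lam)|^\beta = a_n^{-\beta n(n-1)/2}|\Delta_n(\tilde\lam)|^\beta$ together with the Jacobian $a_n^{-n}$, and remembering that $\mu = e^{-x/\delta}dx$, the density of $(\tilde\lam_1,\ldots,\tilde\lam_n)$ with respect to $\mu^{\otimes n}$ reads
$$\rho_n(\tilde\lam) = \ff{Z_{n,\al,\beta}}\,a_n^{-n-\beta\frac{n(n-1)}{2}}|\Delta_n(\tilde\lam)|^\beta\exp\left(-\f{\al}{2}\sum_{i=1}^n\bigl(b_n+\tilde\lam_i/a_n\bigr)^2+\ff{\delta}\sum_{i=1}^n\tilde\lam_i\right).$$

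After splitting the Vandermonde as $|\Delta_n(x,y)|^\beta = |\Delta_k(x)|^\beta|\Delta_{n-k}(y)|^\beta\prod_{i\le k,\,j>k}|x_i-y_j|^\beta$, I would reverse the scaling on the tail coordinates by setting $z_j := b_n + y_j/a_n$. This brings back the original Gaussian weight $e^{-\al z_j^2/2}$, converts $|\Delta_{n-k}(y)|^\beta$ into $a_n^{\beta(n-k)(n-k-1)/2}|\Delta_{n-k}(z)|^\beta$, and rewrites each of the $k(n-k)$ cross terms as
$$|x_i - y_j|^\beta = (a_n b_n)^\beta\left|1 + \f{x_i}{a_n b_n} - \f{z_j}{b_n}\right|^\beta.$$
The $b_n^{k(n-k)\beta}$ factor extracted from these cross terms produces the exponent contribution $k\beta(n-k)\log b_n$ in \eqref{formulation_of_corr_funct}, while the sum over $i,j$ of the logarithmic terms is precisely the integrand of $\tilde R^n_k$. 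Dividing and multiplying by $Z_{n-k,\al,\beta}$ then recognizes the remaining integral as $Z_{n-k,\al,\beta}\,\tilde R^n_k$.

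What is left is a clean bookkeeping of the powers of $a_n$: one collects $a_n^{-n-\beta n(n-1)/2}$ from $\rho_n$, $a_n^{\beta k(n-k)}$ from the cross terms, $a_n^{\beta(n-k)(n-k-1)/2}$ from $|\Delta_{n-k}(y)|^\beta$, and $a_n^{n-k}$ from the Jacobian $dy_{k+1}\cdots dy_n$. The classical identity $\binom{n}{2}=\binom{k}{2}+\binom{n-k}{2}+k(n-k)$ then collapses the total exponent to $-k-\tfrac{\beta}{2}k(k-1)$, matching the statement. There is no conceptual obstacle here; the only delicate point is to synchronize the extraction of $b_n$ from the cross terms with the rewriting of the integrand of $\tilde R^n_k$, so that all $b_n$-dependent contributions land in the exponent rather than inside the integral, and to handle $\beta=0$ as a trivial degenerate case where $\tilde R^n_k = 1$.
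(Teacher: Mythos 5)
Your proposal is correct and follows essentially the same route as the paper: express the density of the rescaled vector with respect to $\mu^{\otimes n}$, split the Vandermonde into the three blocks, undo the scaling on the tail coordinates via $z_j = b_n + y_j/a_n$ to recognize $Z_{n-k,\al,\beta}$ and the integrand of $\tilde R^n_k$, and collapse the powers of $a_n$ using $\binom{n}{2}=\binom{k}{2}+\binom{n-k}{2}+k(n-k)$. The bookkeeping of the $a_n$ and $b_n$ factors matches the paper's computation exactly.
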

	\begin{proof}
		Let $\ds{\mu = e^{-\frac{x}{\delta}}dx}$, ie: $\ds{d\mu^{\otimes n}(x_1,...,x_n) = e^{-\ff{\delta}\sum_{i=1}^n x_i }dx_1...dx_n }$. Let $(\lam_1,...,\lam_n)$ distributed according to $P_{n,\al,\beta}$. By a change of variable in (\ref{P_nab}), the random vector $\ds{\left(  a_n\left( \lam_i-b_n\right) \right) _{i\leq n}}$ has joint density:
		\begin{align*}
	 \frac{a_n^{-\frac{n(n-1)}{2}\beta - n}}{Z_{n,\al,\beta}} \left| \Delta_n(\lam)\right|^\beta e^{-\frac{\al}{2} \sum_{i=1}^{n}\left( \frac{\lam_i}{a_n}+b_n \right)^2} \mathrm{d}\lam_1 \cdots \mathrm{d}\lam_n,
		\end{align*}
		which we express with respect to the measure $\mu$:
		$$\frac{a_n^{-\frac{n(n-1)}{2}\beta - n}}{Z_{n,\al,\beta}} \left| \Delta_n(\lam)\right|^\beta e^{-\frac{\al}{2} \sum_{i=1}^{n}\left( \frac{\lam_i}{a_n}+b_n \right)^2} e^{\ff{\delta}\sum_{i=1}^n \lam_i }\mathrm{d}\mu^{\otimes n}\left( \lam_1,\ldots,\lam_n\right) .$$
		Hence, using the definition (\ref{def_corfct_tool}), we deduce the $k$-th correlation function:
		\begin{align*}
		R^n_k(x_1,...,x_k) &=\frac{n!}{(n-k)!}\frac{a_n^{-n-\beta\frac{n(n-1)}{2}}}{Z_{n,\al,\beta}}e^{-\frac{\al}{2} \sum_{i=1}^{k}\left( \frac{x_i}{a_n}+b_n \right)^2+\ff{\delta}\sum_{i=1}^{k}x_i}\times  \\ & \times \int_{\R^{n-k}}  e^{-\frac{\al}{2} \sum_{i=k+1}^{n}\left( \frac{x_i}{a_n}+b_n \right)^2} \left| \Delta_n(x)\right| ^\beta e^{\ff{\delta}\sum_{i=k+1}^{k}x_i} \mathrm{d}\mu^{\otimes (n-k)}(x_{k+1},...,x_n).
		\end{align*}
		The goal is to extricate the $(x_1,...,x_k)$ from the $(x_{k+1},...,x_n)$, and extract all leading order terms.
		
		To this end, we begin with splitting the Vandermonde term: \begin{align*}
		\prod_{i<j}^n \left|x_i-x_j \right|^\beta &=\left( \prod_{1\leq i<j\leq k} \left|x_i-x_j \right|^\beta  \right) \left( \prod_{k+1\leq i<j\leq n} \left|x_i-x_j \right|^\beta\right) \left(  \prod_{i=1}^k \prod_{j=k+1}^{n}\left|x_i-x_j \right|^\beta \right) .
		\end{align*}
		Note that in the RHS, the first term has $\frac{k(k-1)}{2}$ elements, the 2nd term has $\frac{(n-k)(n-k-1)}{2}$ elements and the last term has ${k(n-k)}$ elements. \\ \\
		Therefore,
		\begin{align*}
		R^n_k(x_1,...,x_k) &=\frac{n!}{(n-k)!} \left| \Delta_k(x)\right| ^\beta \frac{a_n^{-n-\beta\frac{n(n-1)}{2}}}{Z_{n,\al,\beta}}\exp\left( -\frac{\al}{2} \sum_{i=1}^{k}\left( \frac{x_i}{a_n}+b_n \right)^2+\ff{\delta}\sum_{i=1}^{k}x_i\right) \Lambda .
		\end{align*}	
		where: $$\Lambda:= \int_{\R^{n-k}} e^{\beta \sum_{i=1}^{k}\sum_{j=k+1}^{n}\log \left|x_i-y_j \right| }e^{-\frac{\al}{2} \sum_{i=1}^{n-k}\left( \frac{y_i}{a_n}+b_n \right)^2}\left| \Delta_{n-k}(y)\right| ^\beta e^{\ff{\delta}\sum_{i=1}^{n-k}y_i} \mathrm{d}\mu^{\otimes (n-k)}(y_1,...,y_{n-k}) .$$ We introduce the law $P_{n,\al,\beta}$ in the latter quantity. The change of variable $y=a_n(z-b_n)$ and little computation give:
		\begin{align*}
		\Lambda &=a_n^{n-k + \beta \frac{(n-k-1)(n-k)}{2}} \int_{\R^{n-k}} e^{\beta \sum_{i=1}^{k}\sum_{j=1}^{n-k}\log \left|x_i-a_n(z_j-b_n) \right| }e^{-\frac{\al}{2} \sum_{i=1}^{n-k}z_j^2} \prod_{1\leq i<j\leq n-k} \left|z_i-z_j \right|^\beta \prod_{i=1}^{n-k}\mathrm{d}z_i 
		\\&=a_n^{n-k + \beta \frac{(n-k-1)(n-k)}{2}+k\beta(n-k)}Z_{n-k,\al,\beta }e^{k\beta(n-k)\log(b_n)}\times  \\&\qquad\qquad\qquad  \times 
		\int_{\R^{n-k}} e^{\beta \sum_{i=1}^{k}\sum_{j=1}^{n-k}\log \left|1+\frac{x_i}{a_nb_n}-\frac{z_j}{b_n} \right| } \mathrm{d}P_{n-k,\al,\beta}(z_1,.,z_{n-k})
		.
		\end{align*}
		Thus the claim follows.
	\end{proof}
\begin{lem}
Assume $\ds{\delta_n\gg 1}$ and $\mu$ to be the Lebesgue measure $\lam$ on $\R$. In this case, the $k$-th correlation function is given by: \begin{align}\label{corr_function_sans_delta}
		 R^n_k&=\frac{n!}{(n-k)!}a_n^{-k -\frac{\beta}{2}k\left( k-1\right)} \left| \Delta_k(x)\right| ^\beta \frac{Z_{n-k,\al,\beta}}{Z_{n,\al,\beta}} e^{ -\frac{\al}{2} \sum_{i=1}^{k}\left( \frac{x_i}{a_n}+b_n \right)^2+k\beta(n-k)\log(b_n)}  \tilde{R}^n_k
\end{align} with $\tilde{R}^n_k$ defined in (\ref{formulation_of_corr_tilde}). 
\end{lem}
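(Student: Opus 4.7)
The plan is to follow the argument of Lemma \ref{correlation1} verbatim, with the only modification coming from the choice of reference measure. Since $\mu = \lambda$ is Lebesgue measure, the density $\rho_n$ of $(\lam_1,\dots,\lam_n)$ with respect to $\mu^{\otimes n}$ is just $P_{n,\al,\beta}$ itself, and none of the extra exponential factors $e^{\frac{1}{\delta}\sum \lam_i}$ that appeared in the previous lemma show up. Concretely, I would first apply the change of variable $\lam_i \mapsto a_n(\lam_i - b_n)$ in \eqref{P_nab} to obtain the joint density of the rescaled vector $(a_n(\lam_i-b_n))_{i\le n}$ with respect to Lebesgue measure, namely
\[
 \frac{a_n^{-\frac{n(n-1)}{2}\beta - n}}{Z_{n,\al,\beta}} \left| \Delta_n(\lam)\right|^\beta \exp\Bigl(-\tfrac{\al}{2} \sum_{i=1}^{n}\bigl( \tfrac{\lam_i}{a_n}+b_n \bigr)^2\Bigr).
\]

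Next, I would insert this into the definition \eqref{def_corfct_tool} and split the Vandermonde determinant into the three blocks used previously: the pure $k\times k$ block producing $|\Delta_k(x)|^\beta$, the pure $(n-k)\times (n-k)$ block which will combine with the Gaussian weight to reconstruct a partition function, and the cross term of $k(n-k)$ factors that gives rise to $\tilde{R}^n_k$. After extracting $|\Delta_k(x)|^\beta$ and the Gaussian factors involving $x_1,\dots,x_k$, the remaining integral is
\[
 \Lambda := \int_{\R^{n-k}} e^{\beta \sum_{i=1}^{k}\sum_{j=k+1}^{n}\log|x_i-y_j|} e^{-\frac{\al}{2}\sum_{j=k+1}^{n}(y_j/a_n + b_n)^2} |\Delta_{n-k}(y)|^\beta \prod_{j=k+1}^n dy_j,
\]
and the absence of $d\mu^{\otimes(n-k)}$ here is precisely the difference with the previous lemma.

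Finally I would apply the change of variable $y_j = a_n(z_j - b_n)$ to introduce the law $P_{n-k,\al,\beta}$, factoring out the Jacobian power of $a_n$, a power of $b_n$ from pulling $b_n$ out of each $\log$, and the normalizing constant $Z_{n-k,\al,\beta}$. Collecting the $a_n$ exponents $-n-\beta\frac{n(n-1)}{2}$ from the original rescaling with $n-k+\beta\frac{(n-k-1)(n-k)}{2}+k\beta(n-k)$ from $\Lambda$ yields exactly $-k - \frac{\beta}{2}k(k-1)$, matching \eqref{formulation_of_corr_funct}. The resulting formula is \eqref{corr_function_sans_delta}, as claimed.

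There is no real obstacle here; the statement is essentially a bookkeeping corollary of the argument just given, and the only thing to double-check is the exponent accounting for the $a_n$ and $b_n$ factors, which is identical to the one already performed.
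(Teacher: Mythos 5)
Your proposal is correct and follows exactly the route the paper takes: the paper's own proof is a one-line remark that the argument of Lemma \ref{correlation1} carries over with all $\delta$-dependent terms removed, and your expanded version (same change of variables, same three-block Vandermonde split, same $a_n$- and $b_n$-exponent bookkeeping yielding $-k-\frac{\beta}{2}k(k-1)$) is precisely that argument spelled out. No gaps.
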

\begin{proof}
 The proof goes along the same lines as the demonstration of Lemma \ref{correlation1} except that all the $\delta$-dependent terms vanish.
\end{proof}
	\subsection{Pointwise convergence of the correlation functions}
		The goal of this section is to establish the pointwise convergence $\ds{R^n_k(x_1,...,x_k)\xrightarrow[n\to\infty]{}1}$ for any fixed $0<\delta<+\infty$, $1\leq k<+\infty$ and $(x_1,...,x_k)\in \R^k$ under the following hypothesis: $$\beta \ll \ff{n\log(n)},\qquad \al=1, \qquad \delta_n = \delta+o(1) \text{ or } \delta_n\gg 1  .$$
We have already shown the ratio of partition functions converges to $\ds{\left( 2\pi\right)^{-\frac{k}{2}} }$ in (\ref{partition_function_01}) of Lemma \ref{partition_function_0}. The other terms are easily handable, so we begin by proving that the term $\tilde{R}^n_k$ converges to $1$. To this end, we proceed by double inequality.

	\begin{lem}\label{tilde_convergence}
		Let any $(\delta_n)$ positive real sequence such that $\ds{ {\log(\delta_n)}\ll \log(n) }$. Assume $\ds{\beta\ll \ff{n\log(n)}}$. Fix an integer $1\leq k<+\infty$ and $(x_1,...,x_k)\in \R^{k}$, let \begin{align*}
		\tilde{R}^n_k:=\tilde{R}^n_k(x_1,...,x_k) &= \E_{P_{n-k,\beta} }\left(\exp\left( \beta \sum_{i=1}^{k}\sum_{j=1}^{n-k} \log \left|1+\frac{x_i}{a_nb_n}-\frac{\lam_j}{b_n} \right| \right) \right) .
		\end{align*}
		Then the following pointwise convergence holds: $$R^n_k(x_1,...,x_k)\xrightarrow[n\to\infty]{}1.$$ 
		\end{lem}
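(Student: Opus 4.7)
The plan is to reduce the convergence $R^n_k(x_1,\ldots,x_k)\to 1$ to showing $\tilde R^n_k\to 1$, since every other factor in the decomposition \eqref{formulation_of_corr_funct} is controlled by elementary asymptotics. Explicitly, $\frac{n!}{(n-k)!}=n^k(1+o(1))$, $|\Delta_k(x)|^\beta\to 1$ (as $\beta\log|x_i-x_j|\to 0$), and the ratio $Z_{n-k,\beta}/Z_{n,\beta}$ tends to $(2\pi)^{-k/2}$ by \eqref{partition_function_01}. The precise choice of $b_n$ in Theorem \ref{main} is exactly what makes $\exp(-kb_n^2/2)$ combine with the $n^k$ and $(2\pi)^{-k/2}a_n^{-k}$ factors and with $\exp(-b_n\sum x_i/a_n+\frac{1}{\delta}\sum x_i)$ to yield $1+o(1)$; the residual exponential $e^{k\beta(n-k)\log b_n}$ is $1+o(1)$ under $\beta\ll 1/(n\log n)$ because $\log b_n\sim \frac12\log\log n$. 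The same bookkeeping applies verbatim, with trivial simplification, to the density \eqref{corr_function_sans_delta} for $\delta_n\gg 1$.

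The heart of the argument is thus $\tilde R^n_k\to 1$, which I would establish by a double inequality. For the \emph{upper bound}, apply Lemma \ref{lem_technique} to each factor with $a=1+x_i/(a_nb_n)$ and $b=-\lambda_j/b_n$, so that the overlapping product in the integrand splits into one piece depending only on $x$ and one depending only on $\lambda$. The $x$-piece yields a prefactor $2^{k(n-k)\beta}(1+o(1))$, which tends to $1$ since $n\beta\to 0$, while the $\lambda$-piece modifies the Gaussian weight of $P_{n-k,\beta}$ by $\exp(\frac{k\beta}{8b_n^2}\sum_j\lambda_j^2)$. The resulting expectation is precisely the partition function ratio
$$\frac{Z_{n-k,\,1-k\beta/(4b_n^2),\,\beta}}{Z_{n-k,\,1,\,\beta}},$$
which tends to $1$ by \eqref{partition_function_02} of Lemma \ref{partition_function_0}. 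Hence $\tilde R^n_k\leq 1+o(1)$.

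For the \emph{lower bound}, Jensen's inequality applied to $\exp$ gives
$$\log\tilde R^n_k\;\geq\;\beta k(n-k)\,\E_{P_{n-k,\beta}}\bigl[\log|1+u-\lambda_1/b_n|\bigr],\qquad u=x_i/(a_nb_n),$$
by exchangeability. I would show this lower bound is $o(1)$ by splitting the expectation at a threshold $|\lambda_1|\leq M$. On the bulk event, $|\log|1+u-\lambda_1/b_n||=O(M/b_n)$, contributing $O(\beta nM/\sqrt{\log n})=o(1)$. On the complementary event, the positive part of the log is $O(\log b_n)$, while the dangerous negative part (where $\lambda_1$ is close to $b_n(1+u)$) is integrated against the tail bound \eqref{top_estimate_eq} of Lemma \ref{top_estimate}; the Gaussian factor there swallows the logarithmic singularity. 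This is the technical heart and the step that forces the extra logarithmic factor in the growth hypothesis (as flagged in Remark \ref{extrafactor}): one needs $\beta(n-1)\log b_n=o(1)$ and $\beta n\log n\ll 1$ to absorb the prefactor $b_n^{\beta(n-1)-2}$ appearing in \eqref{top_estimate_eq}. The main obstacle is therefore not the upper bound but this careful control of the negative tail of the log; once it is in hand, both inequalities close and $\tilde R^n_k\to 1$, completing the proof.
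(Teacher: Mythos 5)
Your overall architecture coincides with the paper's. The upper bound via Lemma \ref{lem_technique}, producing the prefactor $2^{kn\beta}e^{\frac{n\beta}{8}\sum_i|1+x_i/(a_nb_n)|^2}$ and the ratio $Z_{n-k,1-k\beta/(4b_n^2),\beta}/Z_{n-k,\beta}\to 1$ from \eqref{partition_function_02}, is exactly the paper's first step; the lower bound via Jensen plus exchangeability, reducing everything to $n\beta\,\E_{P_{n,\beta}}\left|\log\left|1+\frac{x}{a_nb_n}-\frac{\lam_1}{b_n}\right|\right|\to 0$, is the paper's second step (isolated as Lemma \ref{jensen}); and the bookkeeping of the remaining factors of $R^n_k$ in \eqref{formulation_of_corr_funct} is the content of Proposition \ref{prop_conv}.

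There is, however, a gap in your treatment of the negative part of the logarithm, which you yourself identify as the technical heart. You propose to control the singular region, where $\lam_1$ is close to $b_n+x/a_n$, by integrating against the \emph{tail} bound \eqref{top_estimate_eq} of Lemma \ref{top_estimate}, asserting that ``the Gaussian factor there swallows the logarithmic singularity.'' This cannot work: a bound on $P_{n,\beta}(|u-\lam_1/b_n|\geq t)$ gives no information about how much mass $\lam_1$ places in a small neighbourhood of the singular point, and integrability of $\log$ against the law of $\lam_1$ near its singularity requires a \emph{local} estimate of the form $P_{n,\beta}(|u-\lam_1/b_n|\leq y)\lesssim y\cdot(\cdots)$ for small $y$. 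This is precisely what Lemma \ref{bulk_estimate} supplies, and it is the exponential factor $\exp\bigl(\frac{n\beta}{8(1-\beta/4)}b_n^2u^2\bigr)\approx\exp\bigl(\frac{n\beta}{4}\log(n)u^2\bigr)$ in that local estimate --- not the prefactor $b_n^{\beta(n-1)-2}$ of the tail bound, which is already $1+o(1)$ under the much weaker condition $n\beta\log\log(n)\ll 1$ --- that forces the restriction $\beta\ll 1/(n\log n)$ flagged in Remark \ref{extrafactor}. Your sketch as written would fail at this step; replacing the tail bound by the bulk estimate \eqref{bulkestimate_eq} (after the change of variables $t=|\log y|$ that converts $\E|X|=\int_0^\infty\p(|X|\geq t)\,\mathrm{d}t$ into the two integrals $\Lambda_1$, $\Lambda_2$ of the paper) closes the argument.
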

	\begin{proof}
		We begin by showing that $\ds{\limsup_{n\infty} \tilde{R}^n_k(x_1,...,x_k)\leq 1}$. Applying the bound (\ref{bound_technique}) of Lemma \ref{lem_technique}, and with little computation, we get:
		\begin{align*}
		\tilde{R}^n_k &\leq 2^{kn\beta}\exp\left( \frac{n\beta}{8} \sum_{i=1}^{k} \left|1+\frac{x_i}{a_nb_n} \right|^2\right)  \frac{Z_{n-k,1-\frac{k\beta}{4b^2_n},\beta}}{Z_{n-k,\beta}}.
		\end{align*}
		With the assumption $n\beta\ll 1$ and $k<+\infty$, it is enough to show this ratio of partition functions converges to $1$, which is provided by (\ref{partition_function_02}) of Lemma \ref{partition_function_0}.
		
		Hence, our task is now to show that $$\liminf_{n\infty} \tilde{R}^n_k(x_1,...,x_k)\geq 1.$$
	
		Since $\exp$ is convex, by Jensen inequality, and exchangeability, it is enough to show that for any $x\in \R$ fixed, $$ \beta (n-k)  \E_{P_{n-k,\beta} }\left| \log \left|1+\frac{x}{a_nb_n}-\frac{\lam_1}{b_n} \right|  \right| \xrightarrow[n\to\infty]{}0.$$
		Since $1\leq k<+\infty$ is also fixed, it is enough to show that for $x\in \R$ fixed,
		$$n \beta  \E_{P_{n,\beta} }\left| \log \left|1+\frac{x}{a_nb_n}-\frac{\lam_1}{b_n} \right|  \right| \xrightarrow[n\to\infty]{}0.$$	\end{proof} The following result will complete our proof:
		
		\begin{lem}\label{jensen}
			Let any $(\delta_n)$ positive real sequence such that $\ds{ {\log(\delta_n)}{}\ll \log(n) }$. Assume $\ds{\beta\ll \ff{n\log(n)}}$. Fix $x\in \R$, then: $$n \beta  \E_{P_{n,\beta} }\left| \log \left|1+\frac{x}{a_nb_n}-\frac{\lam_1}{b_n} \right|  \right| \xrightarrow[n\to\infty]{}0.$$
		\end{lem}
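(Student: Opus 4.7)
My proof plan. Set $Y_n := 1 + \tfrac{x}{a_nb_n} - \tfrac{\lambda_1}{b_n}$ and $u_n := 1 + \tfrac{x}{a_nb_n}$, so $u_n \to 1$ since $a_nb_n \sim 2\delta_n\log n \to \infty$ (even when $\delta_n\gg 1$, our hypothesis $\log(\delta_n)\ll\log(n)$ guarantees this). The starting point is the layer-cake formula: writing $Z = \log|Y_n|$, one has $\E|Z| = \int_0^\infty \p(|Z|\geq t)\,\mathrm{d}t$, and $|\log|Y_n||\geq t$ splits as $\{|Y_n|\geq e^t\}\sqcup\{|Y_n|\leq e^{-t}\}$. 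After the changes of variable $r=e^t$ and $y=e^{-t}$, I am reduced to controlling
\begin{equation*}
I_1 := \int_1^{+\infty}\p(|Y_n|\geq r)\,\frac{\mathrm{d}r}{r},\qquad
I_2 := \int_0^1 \p(|Y_n|\leq y)\,\frac{\mathrm{d}y}{y},
\end{equation*}
and showing $n\beta(I_1+I_2)\to 0$.

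For $I_1$, I apply the top estimate (Lemma \ref{top_estimate}) with $\al=1$, $u=u_n$, and $t=r$. Under the assumptions $n\beta\ll 1/\log n$ and $\log\delta_n\ll\log n$, I would verify the three bookkeeping facts: $u_n\to 1$; $b_n^2 - n\beta/4 \sim b_n^2 \sim 2\log n$; and the prefactor $b_n^{\beta(n-1)-2}$ is $O(b_n^{-2})$ since $\beta n\log b_n \ll \log\log(n)/\log(n)\to 0$. Thus $\p(|Y_n|\geq r)\leq C\,(\log n)^{-1}(r+1)^{-1}\exp(-c(\log n)^2(r+1)^2)$, which gives $I_1$ super-polynomially small in $n$ (much faster than any power of $n\beta$). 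Multiplying by $n\beta\to 0$ is harmless.

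The bulk estimate (Lemma \ref{bulk_estimate}) handles $I_2$: with $a=u_nb_n$ and width $s=yb_n$,
\begin{equation*}
\p(|Y_n|\leq y) = P_{n,\beta}(|\lambda_1 - u_nb_n|\leq yb_n)\leq C\, y b_n\, \exp\!\left(\tfrac{n\beta}{2(4-\beta)}u_n^2 b_n^2\right).
\end{equation*}
Here is where the precise strength of the hypothesis enters: the Gaussian blow-up factor is $\exp(O(n\beta\log n))$, which is bounded (indeed tends to $1$) exactly because $\beta\ll 1/(n\log n)$. So $\p(|Y_n|\leq y)\leq \min(1, C'yb_n)$. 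Truncating the integral at $y_0=1/(C'b_n)$ yields $I_2 \leq 1 + \log(C'b_n) = O(\log\log n)$, whence $n\beta I_2 \leq O\bigl(\tfrac{\log\log n}{\log n}\bigr)\to 0$.

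The main obstacle is precisely that bulk-estimate step: the Gaussian-tail factor $\exp\bigl(\tfrac{n\beta b_n^2}{8}(1+o(1))\bigr)$ has $b_n^2\sim 2\log n$, so controlling it requires $n\beta\log n \to 0$, not merely $n\beta\to 0$. This is the technical source of the stronger hypothesis flagged in Remark \ref{extrafactor}, and any attempt to weaken the growth condition to $\beta\ll 1/n$ would need a sharper tail estimate in the bulk regime near $a\sim\sqrt{2\log n}$ that circumvents this quadratic-in-$b_n$ exponent.
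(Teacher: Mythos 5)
Your proposal is correct and follows essentially the same route as the paper: the layer-cake identity, the change of variables splitting the expectation into the tail integral (handled by the top-eigenvalue estimate of Lemma \ref{top_estimate}) and the small-ball integral (handled by the bulk estimate of Lemma \ref{bulk_estimate}), with the latter being the step that forces $\beta\ll \frac{1}{n\log(n)}$, exactly as the paper notes in connection with Remark \ref{extrafactor}. Your truncation of the small-ball integral at $y_0=1/(C'b_n)$ is a minor cosmetic variation (the paper simply integrates $Cb_n\,\mathrm{d}y$ over $(0,1)$ and bounds $n\beta b_n$ directly), and both versions close the argument.
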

		\begin{proof}[Proof of Lemma \ref{jensen}]
			From the identity $$\E\left|X \right|= \int_{0}^{+\infty}\p\left(\left|X\right|\geq t \right) \mathrm{d}t, $$ setting $\ds{u:=1+\frac{x}{a_n b_n}}$, removing the absolute value, and by a change of variable, we have:
			\begin{align*}
			\E_{P_{n,\beta} }\left| \log \left|1+\frac{x}{a_nb_n}-\frac{\lam_1}{b_n} \right|  \right| &= \int_{0}^{+\infty} P_{n,\beta}\left( \left| \log \left|u-\frac{\lam_1}{b_n} \right|  \right|  \geq t\right) \mathrm{d}t 
			\\&= \int_{1}^{+\infty}\ff{y} P_{n,\beta}\left(   \left|u-\frac{\lam_1}{b_n} \right|    \geq  y\right) \mathrm{d}y +\int_{0}^{1}\ff{y} P_{n,\beta}\left(   \left|u-\frac{\lam_1}{b_n} \right|    \leq y\right) \mathrm{d}y.
			\end{align*}
			Next, we show that both integrals converge to $0$. We set: $$\Lambda_1:= \int_{1}^{+\infty}\ff{y} P_{n,\beta}\left(   \left|u-\frac{\lam_1}{b_n} \right|    \geq  y\right) \mathrm{d}y,\qquad \Lambda_2:= \int_{0}^{1}\ff{y} P_{n,\beta}\left(   \left|u-\frac{\lam_1}{b_n} \right|    \leq y\right) \mathrm{d}y.$$
			Let's treat the term $\Lambda_2$.
			\\ \\ Since $\al=1$ and $n\beta \ll 1$, we can find $M>0$ satisfying the assumption $\al\vee n\beta \leq M$ of Lemma \ref{bulk_estimate}. 
			Hence, with $\al=1$, $\ds{a=b_n+\frac{x_i}{a_n}=b_n u}$ and $\ds{0<y\leq1}$ in the bulk estimate (\ref{bulkestimate_eq}), there exists a constant $C>0$ independent of $n,k,y$ such that:
			\begin{align*}
			P_{n,\beta}\left(\left|u-\frac{\lam_j}{b_n} \right| \leq y  \right) &\leq  C b_ny \exp\left( \frac{ n\beta}{8\left( 1-\frac{\beta}{4}\right) }b^2_n u^2 \right).
			\end{align*}
			It follows that: \begin{align*}
			0\leq (n-k)\beta\Lambda_2 &\leq C n\beta  b_n \exp\left(  \frac{n\beta}{8\left( 1-\frac{\beta}{4}\right) }b_n^2 u^2 \right).
			\end{align*}
			The latter term goes to $0$ if and only if $\ds{\beta \ll \frac{1}{n\log(n)} }$. This is an explicit circumstance where we need to strengthen the restriction on $\ds{\beta\ll \ff{n}}$ alluded in Remark \ref{extrafactor}.

			Regarding the term $\Lambda_1$, we use the top eigenvalue estimate (\ref{top_estimate_eq}) of Lemma \ref{top_estimate} with $\al=1$:
			\begin{align*}
			\Lambda_1 &\leq C_M b_n^{\beta(n-1)-2}\int_1^{+\infty} \frac{e^{-\frac{1}{2}\left( b^2_n-\frac{n\beta}{4}\right)^2 \left( t+\frac{b^2_n u}{b^2_n-\frac{n\beta}{4}} \right)^2}}{t \left( t+\frac{b^2_n u}{b^2_n-\frac{n\beta}{4}} \right) }\mathrm{d}t
			\end{align*}
			The Lebesgue's dominated convergence theorem implies that integral term converges to $0$ which leads to $n\beta \Lambda_1 \ll 1$.
		\end{proof}

	We are ready to achieve the goal of this section:
	\begin{Prop}\label{prop_conv}
		Assume $\al=1$ and $\ds{ \beta \ll \ff{n\log( n)}}$. Let $(\delta_n)$ a positive sequence and the modified Gaussian scaling:$$b_n:=\sqrt{2\log (n)}-\frac{\log\log (n) +2\log(\delta_n)+ \log (4\pi)}{2\sqrt{2\log (n)}},\qquad a_n:= \delta_n\sqrt{2\log (n)} .$$ Fix an integer $1\leq k<+\infty$ and $(x_1,...,x_k)\in \R^k$. In the two cases: $$ a) \quad \delta_n \xrightarrow[n\to\infty]{}\delta>0  \text{ and } \mu=e^{-\frac{x}{\delta}} \mathrm{d}x\qquad b)\quad  \delta_n \gg 1 \text{ with } {\log(\delta_n)}{}\ll\sqrt{\log(n)} \text{ and }\mu=\lam,$$
		the following pointwise convergence holds: $$R^n_k(x_1,...,x_k)\xrightarrow[n\to\infty]{}1. $$
	\end{Prop}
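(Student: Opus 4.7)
The plan is to start from the explicit factorisation of $R^n_k$ given in (\ref{formulation_of_corr_funct}) or (\ref{corr_function_sans_delta}), plug in the two asymptotics already in hand --- the partition-function ratio $\ds{Z_{n-k,\beta}/Z_{n,\beta}=(1+o(1))(2\pi)^{-k/2}}$ from (\ref{partition_function_01}) of Lemma \ref{partition_function_0}, and the convergence $\tilde{R}^n_k \to 1$ from Lemma \ref{tilde_convergence} --- and then check that the remaining purely deterministic prefactor tends to $1$. No extra analytic input is required; the whole argument is an exercise in asymptotic bookkeeping, and the precise choice of $(a_n,b_n)$ is exactly what makes everything cancel.

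Concretely, after these substitutions the task reduces to showing that
\begin{align*}
\frac{n!}{(n-k)!}\,a_n^{-k}\,(2\pi)^{-k/2}\exp\lf(-\tfrac{1}{2}\sum_{i=1}^k\lf(\tfrac{x_i}{a_n}+b_n\ri)^2+\tfrac{1}{\delta}\sum_{i=1}^k x_i\ri)\,|\Delta_k(x)|^\beta\,a_n^{-\frac{\beta k(k-1)}{2}}\,b_n^{k\beta(n-k)}\lto 1,
\end{align*}
with the $\delta^{-1}\sum x_i$ term dropped in case (b). The three $\beta$-dependent factors on the right are handled at once: $|\Delta_k(x)|^\beta\to 1$ since $\beta\to 0$ and $x$ is fixed; $\ds{a_n^{-\beta k(k-1)/2}=\exp\!\bigl(-\tfrac{\beta k(k-1)}{2}(\log\delta_n+\tfrac{1}{2}\log(2\log n))\bigr)\to 1}$, using $\log\delta_n\ll\log n$ together with $n\beta\to 0$; and $b_n^{k\beta(n-k)}=\exp(k\beta(n-k)\log b_n)=\exp(O(n\beta\log\log n))\to 1$, which is exactly the place where the strengthened restriction $n\beta\log n\to 0$ mentioned in Remark \ref{extrafactor} is genuinely needed.

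The heart of the proposal is the remaining balance. Expanding the square and using $b_n^2=2\log n-\log\log n-2\log\delta_n-\log(4\pi)+o(1)$, where the $o(1)$ remainder is $A^2/(8\log n)$ with $A:=\log\log n+2\log\delta_n+\log(4\pi)$ and demands precisely the hypothesis $\log\delta_n\ll\sqrt{\log n}$ in case (b), one obtains
\begin{align*}
\frac{n!}{(n-k)!}\,a_n^{-k}\,(2\pi)^{-k/2}\,e^{-kb_n^2/2}=(1+o(1))\,n^k\cdot\delta_n^{-k}(2\log n)^{-k/2}\cdot(2\pi)^{-k/2}\cdot n^{-k}(\log n)^{k/2}\delta_n^k(4\pi)^{k/2}=1+o(1),
\end{align*}
which exhibits the designed cancellation. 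The quadratic cross term $\sum x_i^2/a_n^2$ vanishes since $a_n\to+\infty$, while the linear cross term satisfies $b_n/a_n=\delta_n^{-1}+O(\log\log n/\log n)$, so that $\exp(-b_n\sum x_i/a_n)$ tends to $e^{-\sum x_i/\delta}$ in case (a), exactly compensating the $\exp(\delta^{-1}\sum x_i)$ factor that came from the choice $\mu=e^{-x/\delta}\mathrm{d}x$; in case (b), $b_n/a_n\to 0$ and the cross term tends to $1$ by itself. The main (modest) obstacle is thus just the careful bookkeeping of the various $\log\log n$ and $\log\delta_n$ corrections: there is no analytic depth beyond Lemmas \ref{partition_function_0} and \ref{tilde_convergence}, but the identity hinges on the tuned definition of $b_n$ and on the way the $\mu$-induced factor in case (a) is engineered to kill the linear drift.
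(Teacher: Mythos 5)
Your proposal is correct and follows essentially the same route as the paper: starting from the factorisation in (\ref{formulation_of_corr_funct})/(\ref{corr_function_sans_delta}), invoking Lemma \ref{partition_function_0} and Lemma \ref{tilde_convergence}, and then verifying that the deterministic prefactor cancels via the expansions of $\log a_n$ and $b_n^2$, with the cross term $b_n/a_n\to\delta^{-1}$ absorbing the $\mu$-induced factor in case (a) and the $A^2/(8\log n)$ remainder forcing $\log\delta_n\ll\sqrt{\log n}$ in case (b). You also correctly locate the two places where the hypotheses bite ($n\beta\log\log n\ll 1$ for the $b_n^{k\beta(n-k)}$ factor, and the $\log^2\delta_n/\log n$ term), exactly as the paper does.
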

	\begin{proof} Since both cases share a lot in common, we proceed to the proof assuming case a) and then only mention the deviations for the second case.
		When $\al=1$, the formula (\ref{formulation_of_corr_funct}) of Lemma \ref{correlation1} gives:\begin{align}\label{aux1}
		R^n_k&= \frac{n!}{(n-k)!}a_n^{-k -\frac{\beta}{2}k\left( k-1\right)} \left| \Delta_k(x)\right| ^\beta \frac{Z_{n-k,\beta}}{Z_{n,\beta}}e^{-\frac{1}{2} \sum_{i=1}^{k}\left( \frac{x_i}{a_n}+b_n \right)^2+\ff{\delta}\sum_{i=1}^{k}x_i+k\beta(n-k)\log(b_n)}  \tilde{R}^n_k
		\end{align}
		with $$\tilde{R}^n_k:=\tilde{R}^n_k(x_1,...,x_k)= \int_{\R^{n-k}} \exp\left( \beta \sum_{i=1}^{k}\sum_{j=1}^{n-k} \log \left|1+\frac{x_i}{a_nb_n}-\frac{z_j}{b_n} \right| \right)  \mathrm{d}P_{n-k,\beta}(z_1,...,z_{n-k}). $$
		We already proved that $\ds{\tilde{R}^n_k(x_1,...,x_k)\xrightarrow[n\to\infty]{}1 }$ in Lemma \ref{tilde_convergence}. Hence, we are reduced to show that:
		$$ \frac{n!}{(n-k)!}a_n^{-k -\frac{\beta}{2}k\left( k-1\right)} \left| \Delta_k(x)\right| ^\beta  \frac{Z_{n-k,\beta}}{Z_{n,\beta}} e^{-\frac{1}{2} \sum_{i=1}^{k}\left( \frac{x_i}{a_n}+b_n \right)^2+\ff{\delta}\sum_{i=1}^{k} x_i}e^{k\beta(n-k)\log(b_n)} \xrightarrow[n\to\infty]{}1. $$
		For each term of the latter, we have the following asymptotics as $k<+\infty$ is fixed: 
			$$ \frac{n!}{(n-k)!}=(1+o(1))n^k, \quad 	a_n^{-\beta\frac{k(k-1)}{2}-k}= \exp\left( -k \log a_n \right) \left( 1+o(1)\right)  $$ 
		$$	\frac{Z_{n-k,\beta}}{Z_{n,\beta}} =  \left( 2\pi \right)^{-\frac{k}{2} }+o(1) , \quad \exp\left( k\beta(n-k)\log b_n\right)  =\exp\left( \frac{k}{2}n\beta \log\log (n)\right)   \left( 1+o(1)\right)  $$
							$$\Delta_k(x_1,...,x_k)^\beta = \prod_{i<j}^{k} \left|{x_i}{}-{x_j} \right|^\beta=\exp\left(\beta \sum_{i<j}^{k}\log \left|{x_i}{}-{x_j} \right| \right)=1+o(1). $$
		Moreover, expanding the square and since $\ds{\frac{b_n}{a_n}=\ff{\delta}+o(1)}$,
		\begin{align}\label{aux2}
		\exp\left( -\frac{1}{2} \sum_{i=1}^{k}\left( \frac{x_i}{a_n}+b_n \right)^2\right) &= \exp\left(- \frac{kb_n^2}{2} -\ff{\delta}\sum_{i=1}^{k}x_i +o(1)\right) .
		\end{align}
		So putting everything together, \begin{align*}
		R^n_k(x_1,...,x_k)&=\exp\left(  k\left(\log (n) -\log( a_n) -\ff{2}b^2_n +n\beta \log (b_n) -\ff{2}\log 2\pi   \right)   \right)   \left( 1+o(1)\right)
		\\&:= e^{k \Lambda_n} \left( 1+o(1)\right) .
		\end{align*}
		Thanks to the following asymptotics:
		$$\log (a_n) = \ff{2}\log (2) + \ff{2}\log\log (n) + \log(\delta) +o(1) $$
		\begin{align}\label{b^2_n}
b^2_n = 2\log(n) - \log\log(n) -2\log(\delta) -\log(4\pi) +o(1),\quad n\beta \log(b_n) \ll 1 
		\end{align}
		the computation of $\Lambda_n$ shows that only negligible terms remain, the others canceling each other out. In relation with Remark \ref{extrafactor}, let us point out that $\ds{n\beta \log(b_n)\ll 1}$ in (\ref{b^2_n}) occurs when $\ds{n\beta {\log\log(n)}\ll 1}$ which is naturally covered by our hypothesis $\ds{n\beta\log(n) \ll 1}$.
		
		Let $(x_1,...,x_k)\in \R^k$ and consider case b). The quantity $\ds{R^n_k(x_1,...,x_k)}$ is given by formula (\ref{corr_function_sans_delta}). Meanwhile, the cross term from the square expansion in the LHS of (\ref{aux2}) vanishes as $\delta_n \gg 1 $ implies $a_n\gg b_n$. Thus its corresponding term in the RHS of (\ref{aux2}) also disappeares. By the asymptotics used previously, when $n\to +\infty$, the quantity $\ds{R^n_k(x_1,...,x_k)}$ is equivalent to the same $\ds{\exp\left( k\Lambda_n\right) }$ as found previously.
		Now, the difference with (\ref{b^2_n}) lies in a cross term in the expansion:
		\begin{align*}
		b^2_n &= 2\log(n) +\frac{\log^2(\delta_n)}{2\log(n)}- \log\log(n) -2\log(\delta_n) -\log(4\pi) +o(1).
		\end{align*}
		Finally, after some cancelations in the computation,
		\begin{align*}
		\Lambda_n&=-\frac{\log^2(\delta_n)}{2\log(n)}+o(1).
		\end{align*}
		The latter quantity converges to $0$ under the growth hypothesis on $(\delta_n)$.
		
	\end{proof}
	\subsection{Uniform upper-bound on the correlation functions}
	The goal of this section is to provide an uniform upper bound for the correlation functions. It constitutes the second hypothesis in the main tool required to show Poisson convergence. We state the result regardless of the measure $\mu$ chosen in Proposition \ref{tool}. Indeed, the correlation functions differs slightly and the proof is not impacted. For these reasons, we will only show the result in the case a) of Proposition \ref{prop_conv}.
\\
	\begin{lem}\label{upperbound}
		Assume $\al=1$, $\ds{\delta_n\xrightarrow[n\to\infty]{}\delta>0}$ and $\ds{\mu=e^{\frac{x}{\delta}}dx}$. Let $K\subset \R$ compact. There exists a constant $\Theta_K>0$ such that for any $n\geq 1$ large enough, any integer $1\leq k\leq n$ and any $(x_1,...,x_k)\in K^k$, $$ {R}^n_{k}(x_1,...,x_k)\leq \Theta^k_K.$$
	\end{lem}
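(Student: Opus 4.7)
The approach is to return to the explicit formula (\ref{formulation_of_corr_funct}) for $R^n_k$ with $\al=1$ and to bound every factor by a constant raised to the $k$-th power, uniformly in $1\le k\le n$ and $(x_1,\dots,x_k)\in K^k$. Set $M:=\sup_K|x|$ and $D:=\operatorname{diam}(K)$. Several factors are immediate: $e^{\frac{1}{\delta}\sum x_i}\le e^{kM/\delta}$; the hypothesis $n\beta\ll 1/\log n$ gives both $n\beta\log b_n\to 0$ and $\beta(k-1)\le n\beta\to 0$, whence $e^{k\beta(n-k)\log b_n}\le e^{k\,o(1)}$ and $|\Delta_k(x)|^\beta\le D^{\beta k(k-1)/2}\le e^k$ for $n$ large; and $a_n^{-\beta k(k-1)/2}\le 1$ since $a_n\ge 1$. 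The partition function ratio $Z_{n-k,\beta}/Z_{n,\beta}$ is already controlled by $(\sqrt{2/\pi})^k$ via (\ref{partition_function_32}) of Lemma \ref{partition_function_3}.

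The decisive cancellation involves the combinatorial factor $n!/(n-k)!\le n^k$, the scaling $a_n^{-k}$, and the Gaussian weight. Expanding the square on $K^k$ yields
\begin{align*}
-\tfrac12\sum_{i=1}^k\Big(\frac{x_i}{a_n}+b_n\Big)^2 \;\le\; -\tfrac{k}{2}b_n^2 + kM\,\tfrac{b_n}{a_n},
\end{align*}
which, since $b_n/a_n\to 1/\delta$, is at most $-kb_n^2/2+kC$ for some constant $C$. Using the explicit expansions $a_n=\delta_n\sqrt{2\log n}$ and $b_n^2=2\log n-\log\log n-2\log\delta_n-\log(4\pi)+o(1)$, a direct computation shows $n\,a_n^{-1}\,e^{-b_n^2/2}\to\sqrt{2\pi}$, so these three factors together contribute at most $(\sqrt{2\pi}\,e^C)^k(1+o(1))^k$. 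Multiplied by the $(\sqrt{2/\pi})^k$ from the partition function ratio, the product of all factors outside $\tilde R^n_k$ is bounded by some $(2e^C)^k$.

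It remains to control $\tilde R^n_k$ uniformly in $k\le n$, and this is where the main work lies. The key algebraic trick is inequality (\ref{bound_technique}) of Lemma \ref{lem_technique}: applied to each $|1+x_i/(a_nb_n)-\lam_j/b_n|^\beta$, it decouples the $x_i$'s from the $\lam_j$'s inside the exponential and produces
\begin{align*}
\tilde R^n_k \;\le\; 2^{k(n-k)\beta}\exp\!\Big(\tfrac{(n-k)\beta}{8}\sum_{i=1}^k\Big(1+\tfrac{x_i}{a_nb_n}\Big)^2\Big)\,\E_{P_{n-k,\beta}}\!\exp\!\Big(\tfrac{k\beta}{8b_n^2}\sum_{j=1}^{n-k}\lam_j^2\Big).
\end{align*}
The first two factors are $e^{k\,o(1)}$ since $n\beta\to 0$ and $x_i/(a_nb_n)\to 0$ uniformly on $K$; the remaining expectation is exactly $Z_{n-k,1-k\beta/(4b_n^2),\beta}/Z_{n-k,\beta}$, which is bounded by $4^k$ via (\ref{partition_function_31}) of Lemma \ref{partition_function_3}. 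Collecting all bounds gives $R^n_k\le\Theta_K^k$ for a constant $\Theta_K$ depending only on $K$. The main obstacle is bookkeeping: one must verify that the three potentially large terms $n^k$, $a_n^{-k}$, and $e^{-kb_n^2/2}$ combine into a universal constant to the $k$-th power, which is precisely the design principle behind the normalizing sequences $(a_n)$ and $(b_n)$.
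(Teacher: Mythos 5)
Your proposal is correct and follows essentially the same route as the paper's proof: bound each elementary factor of \eqref{formulation_of_corr_funct}, observe that $n^k$, $a_n^{-k}$ and $e^{-kb_n^2/2}$ cancel by the very design of $(a_n),(b_n)$, and control $\tilde R^n_k$ by decoupling via Lemma \ref{lem_technique} and the uniform partition-function bounds \eqref{partition_function_31} and \eqref{partition_function_32} of Lemma \ref{partition_function_3}. The only cosmetic difference is that you make the cancellation explicit through the limit $n\,a_n^{-1}e^{-b_n^2/2}\to\sqrt{2\pi}$, where the paper substitutes the definition of $b_n$ directly.
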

\begin{proof} Assume $\al=1$.
	Let $K\subset \R$ compact. We can always find $M:=M_K>1$ such that $\ds{\forall x\in K,\left| x\right| \leq M }$. Let $k\leq n$ and $x_1,...,x_k\in K$.
	Note that $(\delta_n)$ converges to $\delta>0$ hence is bounded. 
	
	Our goal is to bound in terms of the quantity $M$ the formula (\ref{formulation_of_corr_funct}) of the correlation functions $\ds{R^n_k(x_1,...,x_k)}$ given by Lemma \ref{correlation1}.
	
	First, we bound by elementary means the simple terms. The leading order terms will cancel each other in the computation. Then, we tackle the integral term $\tilde{R}^n_k$ by comparing it to some ratio of partition functions.
	
		We begin to notice that, according to (\ref{partition_function_32}) of Lemma \ref{partition_function_3}, the ratio of partition functions in (\ref{formulation_of_corr_funct}) is bounded by $\ds{\left( \frac{2}{\pi}\right) ^{\frac{k}{2}}}$.
	The Vandermonde determinant is easily treated. Since $\ds{\frac{\beta(k-1)}{2}\leq  1}$ for any $1\leq k\leq n$, one has:
	$$ \left| \Delta_k(x)\right| ^\beta = \prod_{i<j}^{k}\left| x_i -x_j \right|^\beta \leq  M^{\beta\frac{k(k-1)}{2}} =\left( M^{\frac{\beta(k-1)}{2}}\right)^k \leq M^k .$$
	Also, $$a_n^{-k -\frac{\beta}{2}k\left( k-1\right)} \leq a^{-k}_n =\exp\left( -k\left(\log(\delta_n)+\ff{2}\log\log (n)+\ff{2}\log(2) \right)  \right) .$$
	Since $\ds{ \forall x\in \R, 1+x\leq e^x}$, we treat the combinaison term as follows, for $1\leq k \leq n$: $$\frac{n!}{n^k(n-k)!}=\prod_{i=0}^{k-1} \left (1-\frac{i}{n}\right) \le 
	\prod_{i=0}^{k-1} \exp\left (-\frac{i}{n}\right)= 
	\exp\left(-\frac{(k-1)k}{2n}\right)\leq 1. $$
	Let's now study the exponential terms:
	\begin{align}\label{exp_term_to_bound}
	\exp\left(-\frac{1}{2}\sum_{i=1}^{k}\left( \frac{x_i}{a_n}+b_n\right)^2  \right)
	&\leq \exp\left(-\frac{k}{2}b^2_n- \frac{b_n}{a_n} \sum_{i=1}^{k}x_i \right)\leq \exp\left(-\frac{k}{2}b^2_n+kc_\delta M\right).
	\end{align}
	We used the fact that, since $(\delta_n)$ is bounded, there exists $c_\delta>0$ such that for any $n\geq 1$: $$\frac{b_n}{a_n}\leq \frac{1}{\delta_n}\leq c_\delta. $$
	For $1\leq k\leq n$, since $\ds{b_n\leq \sqrt{2\log(n)}}$ and $\ds{n\beta\ll 1}$,\begin{align*}
	k\beta(n-k)\log(b_n) &\leq  \frac{k}{2}n\beta \log\log(n)+k.
	\end{align*}
	Hence, using the definition of $b_n$ in (\ref{exp_term_to_bound}), the leading order terms in (\ref{formulation_of_corr_funct}) cancel:
	\begin{align*}
	R^n_k(x_1,...,x_k)&\leq M^{k}  \exp\left( k+3k\log(2)+kc_\delta M\right) \tilde{R}^n_k(x_1,...,x_k).
	\end{align*}
	It remains to bound the term $\tilde{R}^n_k$. \\ \\
	Applying the bound (\ref{bound_technique}) of Lemma \ref{lem_technique} on the formula (\ref{formulation_of_corr_tilde}), we get:
	\begin{align}\label{rtilde_1}
	\tilde{R}^n_k(x_1,...,x_k) &\leq 2^{kn\beta}e^{\frac{n\beta}{8} \sum_{i=1}^{k} \left|1+\frac{x_i}{a_nb_n} \right|^2} \frac{Z_{n-k,1-\frac{k\beta}{4b^2_n},\beta}}{Z_{n-k,\beta}}.
	\end{align}
	By inequality (\ref{partition_function_31}) of Lemma \ref{partition_function_3}, the ratio of partition functions is bounded by $4^k$.
	
	Moreover,
	$$\exp\left( \frac{n\beta}{8} \sum_{i=1}^{k} \left|1+\frac{x_i}{a_nb_n} \right|^2\right)  \leq \exp\left( \ff{8}\sum_{i=1}^{k}\left( 1+\frac{x^2_i}{a^2_nb^2_n}+\frac{2x_i}{a_nb_n}\right)  \right) \leq \exp\left( {\frac{k}{8}+\frac{k}{8}M^2 +\frac{k}{4} M}\right) .$$
	Thus, (\ref{rtilde_1}) becomes:	\begin{align*}
	\tilde{R}^n_k(x_1,...,x_k) &\leq 2^{3k} \exp\left( \frac{k}{8}+\frac{k}{8}M^2+\frac{k}{4} M\right) .
	\end{align*}
	The claim follows with: $$\Theta_K= \exp\left( {\frac{9}{8}+6\log(2)+c_\delta M+\ff{8}\left( M^2 + 2 M\right)+\log\left( M\right)  } \right) >0.$$
\end{proof}
	\subsection{The Gaussian case: $\beta=0$}
	In this last subsection, we derive our result on homogeneous limiting Poisson process in the purely Gaussian case $\beta = 0$. Although the correlation functions method also applies (as we did for the inhomogeneous case when $\beta=0$), it turns out that the classical method from EVT provides a better regime for the perturbation $(\delta_n)$. We formulate the result and prove it.
	\begin{Prop}
		Let $(\lam_i)_{i\leq n}$ an i.i.d. sequence of $\mathcal{N}(0,1)$. Let $\delta_n\gg 1$ such that $\ds{{\log(\delta_n)}{{}}\ll \log(n)}$, and: $$ a_n = \delta_n\sqrt{2\log(n)}$$ $$b_n = \sqrt{2\log(n)}-\ff{2}\frac{\log\log(n)+2\log(\delta_n)+\log(4\pi)}{\sqrt{2\log(n)}}.$$
		Then, the point process $\ds{\sum_{i=1}^{n}\delta_{a_n(\lam_i-b_n)}}$ converges to a Poisson point process on $\R$ with intensity $1$. 
	\end{Prop}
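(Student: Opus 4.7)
The plan is to apply the classical Kallenberg criterion for Poisson convergence of i.i.d.\ empirical point processes: since the $\lam_i$ are i.i.d., $\xi_n:=\sum_{i=1}^{n}\delta_{a_n(\lam_i-b_n)}$ will converge in law to a Poisson point process on $\R$ with Lebesgue intensity provided that, for every bounded interval $[s,t]\subset\R$,
$$I_n(s,t):=n\,\p\bigl(a_n(\lam_1-b_n)\in[s,t]\bigr)\;=\;n\bigl(\ovl{\Phi}(b_n+s/a_n)-\ovl{\Phi}(b_n+t/a_n)\bigr)\xrightarrow[n\to\infty]{}t-s,$$
where $\ovl{\Phi}=1-\Phi$ denotes the standard Gaussian tail. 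The entire argument then reduces to this single tail asymptotic.

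To compute the limit, I would rewrite $I_n$ as the explicit Gaussian integral
$$I_n(s,t)=\frac{n}{a_n\sqrt{2\pi}}\int_s^t \exp\Bigl(-\tfrac{1}{2}(b_n+v/a_n)^2\Bigr)\,\mathrm{d}v,$$
and expand the exponent as $\frac{1}{2}b_n^2+vb_n/a_n+v^2/(2a_n^2)$. Under the hypotheses, $a_n=\delta_n\sqrt{2\log n}\to+\infty$ and $b_n/a_n\sim 1/\delta_n\to 0$, so the two $v$-dependent summands vanish uniformly on $v\in[s,t]$, and the integral reduces to $(t-s)\,e^{-b_n^2/2}(1+o(1))$. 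It remains to verify
$$\frac{n}{a_n\sqrt{2\pi}}\,e^{-b_n^2/2}\;\xrightarrow[n\to\infty]{}\;1.$$
Setting $L_n:=\log\log n+2\log(\delta_n)+\log(4\pi)$, the definition of $b_n$ gives the expansion $b_n^2=2\log n-L_n+L_n^2/(8\log n)$; substituting $a_n=\delta_n\sqrt{2\log n}$ and collecting the logarithmic terms, the explicit cancellations yield
$$\frac{n}{a_n\sqrt{2\pi}}\,e^{-b_n^2/2}=\exp\bigl(-L_n^2/(16\log n)\bigr),$$
which converges to $1$ under the growth condition imposed on $(\delta_n)$.

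The hard part is the careful bookkeeping of the cancellations between the exponentially large factor $n\,e^{-b_n^2/2}$ and the normalization $a_n\sqrt{2\pi}$: the defining formula of $b_n$ is chosen precisely so that the leading-order contributions in $\log n$, $\log\log n$, $\log(\delta_n)$, and $\log(4\pi)$ balance exactly. One could equivalently proceed through the Mills ratio $\ovl{\Phi}(x)\sim\phi(x)/x$, but the integral form above makes the uniform control of the $v$-dependent error terms more transparent. Once $I_n(s,t)\to t-s$ is verified, Kallenberg's criterion for i.i.d.\ point processes immediately delivers the claimed Poisson limit with intensity $1$.
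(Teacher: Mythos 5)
Your overall route is the same as the paper's: reduce, via the classical criterion for i.i.d.\ samples, to showing $n\,\p\bigl(a_n(\lam_1-b_n)\in[s,t]\bigr)\to t-s$, and then do Gaussian tail asymptotics. Your integral representation is in fact a cleaner variant than the paper's, which applies Mill's ratio separately to $\ovl{\Phi}(\phi_n(x))$ and $\ovl{\Phi}(\phi_n(y))$ and then subtracts; since here the two tails agree to leading order (their ratio tends to $1$ because $\delta_n\to\infty$), differencing two $(1+o(1))$-approximations is delicate, whereas your exact integral $\frac{n}{a_n\sqrt{2\pi}}\int_s^t e^{-(b_n+v/a_n)^2/2}\,\mathrm{d}v$ sidesteps this entirely.

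The problem is the last step. Your identity $\frac{n}{a_n\sqrt{2\pi}}\,e^{-b_n^2/2}=\exp\bigl(-L_n^2/(16\log n)\bigr)$ with $L_n=\log\log n+2\log(\delta_n)+\log(4\pi)$ is correct, but the conclusion that it tends to $1$ does not follow from the stated hypothesis. The assumption $\log(\delta_n)\ll\log n$ only gives $L_n\ll\log n$, which is compatible with $L_n^2/\log n\to+\infty$: for instance $\log(\delta_n)\sim(\log n)^{3/4}$ yields $L_n^2/(16\log n)\sim\tfrac14(\log n)^{1/2}\to\infty$, so your normalizing factor tends to $0$ and the expected number of points in any bounded interval vanishes. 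What your computation actually requires is $L_n\ll\sqrt{\log n}$, i.e.\ $\log(\delta_n)\ll\sqrt{\log n}$ --- precisely the condition imposed in the $\beta>0$ part of Theorem \ref{main}. Note that the paper's own proof has the same issue: it asserts $\frac{n e^{-\phi_n(x)^2/2}}{\sqrt{2\log n}\sqrt{2\pi}}=\delta_n(1+o(1))$ without exhibiting the factor $e^{-L_n^2/(16\log n)}$, so your more explicit bookkeeping has surfaced a real obstruction to the claimed weakening of the hypothesis rather than introduced a new error. Under the corrected assumption $\log(\delta_n)\ll\sqrt{\log n}$, your argument closes and is complete.
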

	\begin{proof}
		We set $\ds{\phi_n(x)=\frac{x}{a_n}+b_n.}$
		Since we consider a collection of $n$ i.i.d. random variables and a homogeneous limiting Poisson process, that is with intensity proportional to $d\lam$ where $\lam$ is the Lesbegue measure on $\R$, it is enough \cite[Th 7.1]{Coles} to show that for any $x<y$, $$\Lambda:=n\left( \p\left( \lam_1\geq \phi_n(x)\right)-\p\left( \lam_1\geq \phi_n(y)\right) \right) \xrightarrow[n\to\infty]{} y-x. $$
 		By Mill's ratio, we know that for any $u\gg 1$, $$ \p\left( \lam_1\geq u\right) = \frac{\exp\left( -\frac{u^2}{2}\right) }{u\sqrt{2\pi}} \left( 1+o(1)\right) .$$
		Under the hypothesis $\ds{\log(\delta_n)\ll \log(n)}$, one has $\ds{b_n \sim \sqrt{2\log(n)}}$, hence $\ds{\phi_n(x)\sim \sqrt{2\log(n)}}$. \\ \\ We get: \begin{align*}
		\Lambda &= \frac{n}{\sqrt{2\pi}}\left(\frac{e^{-\frac{\phi_n(x)^2}{2}}}{\phi_n(x)}-\frac{e^{-\frac{\phi_n(y)^2}{2}}}{\phi_n(y)} \right) \left( 1+o(1)\right) 
		\\&= \frac{ne^{-\frac{\phi_n(x)^2}{2}}}{\sqrt{2\log(n)} \sqrt{2\pi}}\left(1-e^{\frac{\phi_n(x)^2-\phi_n(y)^2}{2}} \right) \left(1+o(1) \right) .
		\end{align*}
		A little computation gives:
		$$\frac{\phi_n(x)^2-\phi_n(y)^2}{2} = \frac{x^2-y^2}{4\delta^2_n \log(n)}+\frac{x-y}{\delta_n} - \frac{(x-y)\log\log(n)}{2\delta_n\log(n)}-\frac{(x-y)\log(\delta_n)}{\delta_n \log(n)}-\frac{(x-y)\log(4\pi)}{2\delta_n\log(n)}. $$
		The highest order term is $\ds{\frac{x-y}{\delta_n}}$. Indeed,
		$$\frac{\log(\delta_n)}{\delta_n \log(n)}\ll \ff{\delta_n}\iff \log(\delta_n)\ll \log(n) \quad \text{which is true.} $$
		We deduce that: \begin{align*}
		\Lambda &=\frac{ne^{-\frac{\phi_n(x)^2}{2}}}{\sqrt{2\log(n)} \sqrt{2\pi}}\left(\frac{y-x}{\delta_n} \right) \left(1+o(1) \right) .
		\end{align*}
		To conclude, we compute: $$\frac{ne^{-\frac{\phi_n(x)^2}{2}}}{\sqrt{2\log(n)} \sqrt{2\pi}} = \delta_n\left( 1+o(1)\right) . $$ 
	\end{proof}
		\bibliographystyle{plain}
	\bibliography{refpoisson}

\begin{thebibliography}{10}

\bibitem{allez2012invariant}
Romain Allez, Jean-Philippe Bouchaud, and Alice Guionnet.
\newblock Invariant {b}eta ensembles and the {G}auss-{W}igner crossover.
\newblock {\em Physical review letters}, 109(9):094102, 2012.

\bibitem{TWHighTemp}
Romain Allez and Laure Dumaz.
\newblock Tracy--{W}idom at high temperature.
\newblock {\em J. Stat. Phys.}, 156(6):1146--1183, 2014.

\bibitem{cupbook}
Greg~W. Anderson, Alice Guionnet, and Ofer Zeitouni.
\newblock {\em An introduction to random matrices}, volume 118 of {\em
  Cambridge Studies in Advanced Mathematics}.
\newblock Cambridge University Press, Cambridge, 2010.

\bibitem{PoissonTempLow}
Florent Benaych-Georges and Sandrine P\'ech\'e.
\newblock Poisson statistics for matrix ensembles at large temperature.
\newblock {\em J. Stat. Phys.}, 161(3):633--656, 2015.

\bibitem{Coles}
Stuart Coles.
\newblock {\em An introduction to statistical modeling of extreme values}.
\newblock Springer Series in Statistics. Springer-Verlag London, Ltd., London,
  2001.

\bibitem{MatrixModelBetaEnsemble}
Ioana Dumitriu and Alan Edelman.
\newblock Matrix models for beta ensembles.
\newblock {\em J. Math. Phys.}, 43(11):5830--5847, 2002.

\bibitem{Dyson}
Freeman~J. Dyson.
\newblock Statistical theory of the energy levels of complex systems {I}, {II},
  {III}.
\newblock {\em Journal of Mathematical Physics}, 3(1):140--175, 1962.

\bibitem{Sutton1}
Alan Edelman and Brian~D. Sutton.
\newblock From random matrices to stochastic operators.
\newblock {\em J. Stat. Phys.}, 127(6):1121--1165, 2007.

\bibitem{Forrester}
Peter~J. Forrester.
\newblock {\em Log-gases and random matrices}, volume~34 of {\em London
  Mathematical Society Monographs Series}.
\newblock Princeton University Press, Princeton, NJ, 2010.

\bibitem{Leadbetter}
M.~R. Leadbetter, Georg Lindgren, and Holger Rootz\'en.
\newblock {\em Extremes and related properties of random sequences and
  processes}.
\newblock Springer Series in Statistics. Springer-Verlag, New York-Berlin,
  1983.

\bibitem{Mehta}
Madan~Lal Mehta.
\newblock {\em Random matrices}, volume 142 of {\em Pure and Applied
  Mathematics (Amsterdam)}.
\newblock Elsevier/Academic Press, Amsterdam, third edition, 2004.

\bibitem{Duy1}
Fumihiko Nakano and Khanh~Duy Trinh.
\newblock Gaussian beta ensembles at high temperature: {E}igenvalue
  fluctuations and bulk statistics.
\newblock {\em Journal of Statistical Physics}, 173(2):295--321, 2018.

\bibitem{RRV}
Jos{\'e}~A. Ram{\'i}rez, Brian Rider, and B{\'a}lint Vir{\'a}g.
\newblock Beta ensembles, stochastic {A}iry spectrum, and a diffusion.
\newblock {\em J. Amer. Math. Soc.}, 24(4):919--944, 2011.

\bibitem{Resnick}
Sidney~I. Resnick.
\newblock {\em Extreme values, regular variation and point processes}.
\newblock Springer Series in Operations Research and Financial Engineering.
  Springer, New York, 2008.
\newblock Reprint of the 1987 original.

\bibitem{Duy2}
Tomoyuki Shirai and Khanh~Duy Trinh.
\newblock The mean spectral measures of random {J}acobi matrices related to
  {G}aussian beta ensembles.
\newblock {\em Electron. Commun. Probab.}, 20:13 pp., 2015.

\bibitem{VV}
Benedek Valk{\'o} and B{\'a}lint Vir{\'a}g.
\newblock Continuum limits of random matrices and the {B}rownian carousel.
\newblock {\em Invent. Math.}, 177(3):463--508, 2009.

\bibitem{virag}
B{\'a}lint Vir{\'a}g.
\newblock Operator limits of random matrices.
\newblock In {\em Proceedings of the {I}nternational {C}ongress of
  {M}athematicians---{S}eoul 2014. {V}ol. {IV}}, pages 247--271. Kyung Moon Sa,
  Seoul, 2014.

\end{thebibliography}
	\Addresses
\end{document}